\documentclass{amsart}
\usepackage{amsmath,graphicx,verbatim, amsfonts,amssymb,amsthm,pictexwd,comment, amscd}
 \usepackage{url}
\usepackage{subfig, enumerate}
\usepackage{graphics}
\theoremstyle{plain}

\newcommand{\Z}{\mathbb{Z}}

\newcommand{\Q}{\mathbb{Q}}
\newcommand{\R}{\mathbb{R}}

\newcommand{\Int}{\mbox{Int}}
\newtheorem{theorem}{Theorem}[section]
\newtheorem{corollary}[theorem]{Corollary}
\newtheorem{lemma}[theorem]{Lemma}

\theoremstyle{definition}
\newtheorem{definition}[theorem]{Definition}
\newtheorem{remark}[theorem]{Remark}

\numberwithin{equation}{section}



\begin{document}
\title{The Triple Lattice PETs}
\author{Ren Yi}
\address{Department of Mathematics, Brown University, Providence, Rhode Island 02912}
\email{renyi@math.brown.edu}
\subjclass[2010]{Primary 37E20; Secondary 37E05}
\date{}
\keywords{Polytope exchange transformations, symbolic dynamics}
\begin{abstract}
Polytope exchange transformations (PETs) are higher dimensional generalizations of interval exchange transformations (IETs) which have been well-studied for more than 40 years. A general method of constructing PETs based on multigraphs was described by R. Schwartz in 2013. In this paper, we describe a one-parameter family of multigraph PETs called the triple lattice PETs. 

We show that there exists a renormalization scheme of the triple lattice PETs in the interval $(0,1)$. We analyze the limit set $\Lambda_\phi$ with respect to the parameter $\phi=\frac{-1+\sqrt 5}{2}$. By renormalization, we show that $\Lambda_\phi$ is the limit of embedded polygons in $\R^2$ and its Hausdorff dimension satisfies the inequality $1< \dim_H(\Lambda_\phi) = \log(\sqrt 2-1)/\log(\phi)<2$.
\end{abstract}

\maketitle

\section{Introduction}
Polytope exchange transformations (PETs) are dynamical systems that generalize interval exchange transformations (IETs). The definition of PETs is given as follows: 
\begin{definition}
Let $X$ be a polytope. A \textit{polytope exchange transformation (PET)} is determined by two partitions of small polytopes $\mathcal A = \{A_i\}_{i=1}^m$ and $\mathcal B = \{B_i\}_{i=1}^m$ of $X$.  For each $A_i \in \mathcal A$, there exists a vector $V_i$ satisfying the property that 
\[
B_i = A_i + V_i.
\]

A PET $f: X \to X$ is defined by the formula:
\[
f(x) = x+ V_i, \quad \forall x \in \Int(A_i).
\]
We call $V_i$ a translation vector of $f$ on $A_i$. Note that the PET $f$ is not defined on the boundary $\partial A_i$ for each $i$. 
\end{definition} 

Now, we introduce a coding of system of PETs. Given an initial point $p \in X$, we associate its \textit{coding} which is a sequence $\omega=\omega_0\omega_1\cdots$ for $\omega_n \in \{1, \cdots, m\}$ defined by 
\[
\omega_n = i \quad \mbox{if $f^n(p) \in A_i$}.
\]
\begin{definition}
\begin{enumerate}
\item A point is called $\textit{periodic}$ if $f^n(p)=p$ for $n \in \Z_+$. 
\item Suppose $p \in X$ is a periodic point of the map $f$. There exists a  maximal subset $\Delta_p$ containing $p$ such that the coding for each point in $\Delta_p$ is same as the one for $p$. We call the set $\Delta_p$ a \textit{periodic tile} of $p$. 
\item The union of all periodic tiles in $X$ is called the \textit{periodic pattern} which is denoted by $\Delta$.
\end{enumerate}
\end{definition}
However, it is unnecessary that every point in $X$ is periodic. We are interested in the case when $\Delta$ is dense.
\begin{definition}
When $\Delta$ is dense, we define the \textit{limit set}, denoted by $\Lambda$, as the set of points such that every neighborhood of the point in $\Lambda$ intersects infinitely many periodic tiles.  
\end{definition}
Note that the limit set $\Lambda$ contains all the points with well-defined and arbitrarily long orbits. We call such points  \textit{aperiodic points}. The union of all aperiodic points is called \textit{aperiodic set} with notation $\Lambda'$.

\subsection{Backgound}
The one-dimensional example of PETs are interval exchange transformations (IETs) which have been studied extensively (\cite{K1}, \cite{V}, \cite{Y} and \cite{Z}).  One simple construction of of dimension two or higher PETs comes from the products of IETs. Haller generalize this construction by introducing \textit{rectangle exchange transformations} \cite{H}. Haller establishes criterions of minimality for rectangle exchange transformations.

The dynamical systems of \textit{piecewise isometry} are closely related to PETs. Early examples of piecewise isometries  are studied in the paper \cite{AKT} and \cite{G}. Let $T: X \to X$ be a piecewise isometry on a polytope $X$. If we restrict the isometry on each piece to either a translation or a rotation by $q\pi$ for $q \in \Q$, we call the map $T$ a \textit{piecewise rational rotation}. (See \cite{AE}, \cite{GP}, \cite{LKV} and \cite{L1} for references.) There is a natural construction of PETs from piecewise rational rotations. Let $T: X \to X$ be a piecewise rational rotation. There exists a PET $\tilde T: \tilde X \to \tilde X$ in the covering space $(\tilde X, \pi)$ of $X$ so that $\pi \circ \tilde T = T \circ \pi$. 

In \cite{S4}, Schwartz introduce a general method of constructing PETs in all dimensions based on multigraphcs (see Section \ref{multi}). Let $G$ be a  multigraph such that each vertex is labeled by a convex polytope and each edge is labeled by a Euclidean lattice. Moreover,  a vertex is incident to an edge if and only if the vertex label is a fundamental domain of the edge label. There is a functorial homomorphism between the fundamental group $\pi_1(G, x)$ for a base vertex $x \in G$ and the group $\text{PET}(X)$ of PETs defined on the labeled polytope $X$ of $x$. We call the resulting systems \textit{the multigraph PETs}. Schwartz provides a one-parameter family of PETs called \textit{octagonal PETs} which corresponds to bigons (two vertices connected by two edges). He shows that there is  a local equivalence between outer billiards on semi-regular octagons and octagonal PETs. 

Few general results of PETs or piecewise isometries are known. Gutkin and Haydn \cite{GN} proved that piecewise isometries have zero topological entropy in dimension two. In the paper \cite{B}, Buzzi shows that the statement holds true in all dimensions. Our lack of understanding comes from the dynamics on the set whose orbits are not periodic. For example, we would like to understand the minimality or ergodicity on the set. More importantly,  we would like to know if it is possible to construct a recurrent PET in dimension two or higher. 

The scheme of renormalization is used to understand PETs (or more generally, piecewise isometries) in a lot of cases. Renormalization is a tool to zoom into the space and accelerate the orbits of points along time. To see this, we provide a basic definition: 
\begin{definition}
Let $Y$ be a subset of $X$. Given a map $f: X \to X$, the \textit{first return} $f|_Y: Y \to Y$ is a map assigns every point $p \in Y$ to the first point in the forward orbit of $p$ lying in $Y$, i.e.
\[
f|_Y(p) = f^n(p) \quad \mbox{where $n=\min\{f^k(p) \in Y\}  \quad k > 0$}.
\]
\end{definition}

For IETs, Rauzy induction \cite{R} provides a renormalization scheme. A classical example of renormalizable piecewise isometries is described in the survey paper \cite{G1} by Goetz. In the paper \cite{L1}, Lowenstein develops a general theory of piecewise rational rotations. Hooper gives the first example of PETs in 2-dimensional parameter space which is invariant under renormalization \cite{HO}. The renormalization scheme arises from collapsing the reducing the loops in Truchet tilings. A recent paper \cite{BB} studies an example of piecewise isometries which is very similar to Hooper's example. A renormalization scheme of the system is discovered through a symbolic coding of the system. In \cite{S4}, Schwartz shows that there is a renormalization scheme on the one-parameter family of octagonal PETs. Moreover, he finds that the hyperbolic triangular group $(2,4,\infty)$ acts on the parameter space by linear fractional transformation as a renormalization symmetry group.

\subsection{Multigraph PETs}\label{multi}
Here we describe a construction of PETs in all dimensions called \textit{multigraph PETs}, which is introduced by Schwartz in \cite{S3}.  

\begin{definition}
A \textit{multigraph} is a monogon-free graph such that two vertices may be connected by more than one edge.
\end{definition}

Let $G=(V,E)$ be a multigraph such that every vertex $v \in V$ is labeled by a convex polytope and every edge $e \in E$ is labeled by a Euclidean lattice. Moreover, two vertex $v_0$ and $v_1$  are connected by an edge $e$ iff the labeled convex polytopes $X$ and $Y$ for $v_0$ and $v_1$ respectively are fundamental domains of the  lattice $L$ associated to the edge $e$.  Then, we can translate almost every point in $X$ to $Y$ by a unique vector in the lattice $L$. Therefore, each closed path based on a vertex $v_0$ in $G$ corresponds to a PET on $X$.  The resulting system is called the multigraph PETs. The construction is considered functorial: there is a functor from the the fundamental group $\pi_1(G, x)$ based on a vertex $x$ to the group of PETs defined on the labeled convex polytope $X$ of $x$, i.e.
\[
\pi_1(G,x) \to \text{PET}(X).
\]

\subsection{Triple Lattice PETs}
In this section, we describe the construction of triple lattice PETs whose corresponding multigraph are triangles. Let $F_0$ be a parallelogram determined by the vectors $(2,0)$ and $(s, \sqrt 3 s)$ for $s \in [0,1)$. Let $r_0, r_1$ and $r_2$ be the reflections about the lines in the directions of cubic roots of unity passing through the origin. Let $D_6$ be the dihedral group of order $6$ generated by the reflections $r_0, r_1$ and $r_2$. The 6 parallelograms in Figure \ref{fig:triple_lattice} are in the orbit of $F_0$ under the dihedral group $D_6$. Let $F_i$ be the parallelogram centered at the origin and translation equivalent to the one labeled as $F_i$  in Figure \ref{fig:triple_lattice} for $i\in \{0,1,2\}$. Let $L_i$ be the lattice generated by the vectors which are the sides of the parallelogram labeled by $L_i$ in Figure $\ref{fig:triple_lattice}$ for $i \in \{0,1,2\}$. 
\begin{figure}[h]
\centering
\includegraphics[width=1.6in]{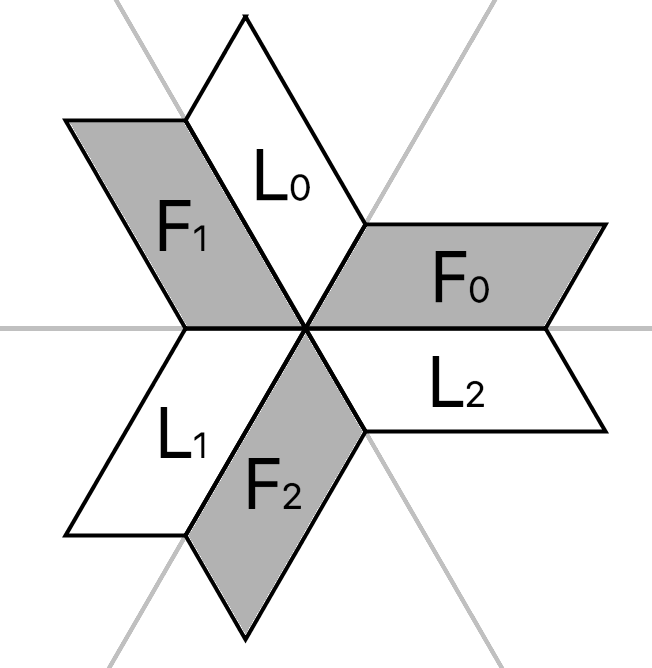}
\hspace{4em}
\includegraphics[width=1.7in]{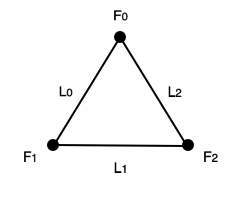}
\caption{The scheme for the triple lattice PETs}
\label{fig:triple_lattice}
\end{figure}

We will verify the fact that $F_i$ and $F_{(i+1) \mod 3}$ are fundamental domains of the lattice $L_i$ in the later section. It follows that for almost every point $p\in F_i$, there exists a unique vector $V_p \in L_i$ such that $p+V_p \in F_{(i+1) \mod 3}.$ To define the triple lattice PETs,  we consider the set $X_s' = \displaystyle\bigcup_{i=0}^2 F_i$. Given $p \in F_i$, define the map $g_s : X'_s \to X'_s$ by the formula:  
\[
g_s(p) = p + V_p \in F_{(i+1) \mod 3}, \quad V_p \in L_i. 
\]
We leave $g$ undefined on the point $p$ when $V_p$ is not uniquely determined by $p$.
\begin{definition}
Let $X_s = F_0$. The \textit{triple lattice PET} is given by the map $f'_s: X_s \to X_s$ such that
\[
f'_s = (g_s)^3.
\] 
We denote the system by $(X_s,f'_s)$. 

In order to obtain symmetries, we introduce a \textit{cut-and-paste} operation (Section \ref{cut_and_paste}) which can be seen as a simple rearrangement of the map $f'_s$ (Figure \ref{fig:cut_paste}). We call the the resulting system after cut-and-paste the triple lattice PETs denoted by $(X_s, f_s)$. 
\end{definition}

\begin{figure}[h]
\centering
\includegraphics[scale=0.56]{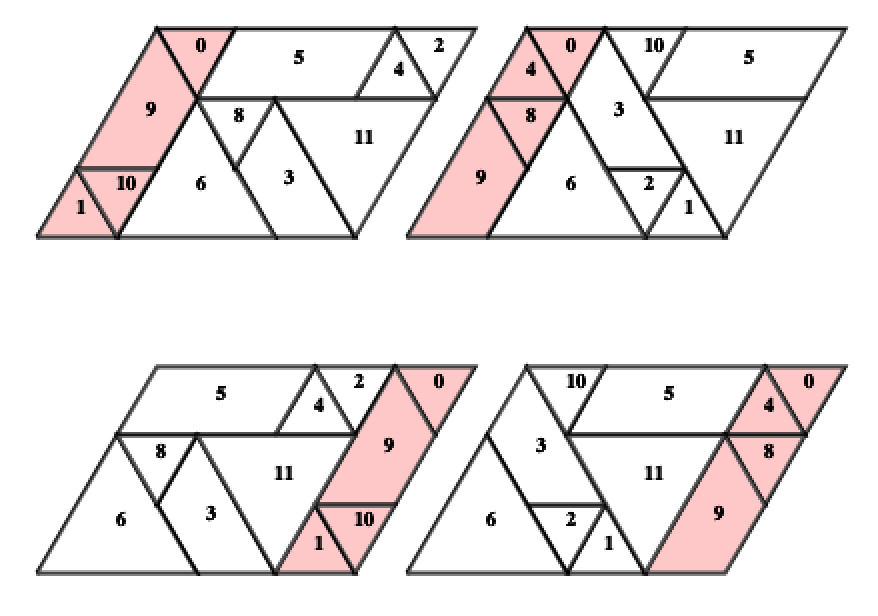}
\caption{An illustration of the cut-and-paste operation on $X_s$ for $s=3/4$. The top figures show the partitions $\mathcal A'$ and $\mathcal B'$ for the triple lattice PET $f'_s$. The bottom figure show the modified partitions $\mathcal A$ and $\mathcal B$ for the map $f_s$ obtained by translating all the elements of $\mathcal A'$ and $\mathcal B'$ in the red parallelograms to the right and then move the resulting parallelogram centered at the origin again.}
\label{fig:cut_paste}
\end{figure}

\subsection{Main Results}
Let $R_1, R_2: (0,1) \to (0,1)$ be the maps given by
\[
R_1(s) =  \frac{s}{1+ s - s\lfloor 1/s \rfloor} \quad \mbox{and} \quad R_2(s) = \frac{1-s}{s}.
\]
We define the renormalization map $R: (0,1) \to (0,1)$ as follows
\[
R(s)=\left\{
\begin{array}{l l}
R_1(s) & \mbox{if $s \in (0,\frac{1}{2})$, }\\
R_2(s) & \mbox{if $s \in [1/2, 2/3)$, }\\
R_1  R_2(s) & \mbox{if $s \in $[2/3, 1)}.
\end{array}
\right.
\]
\begin{theorem}[Renormalization]\label{main}
Let $s \in (0,1)$ and $t=R(s)$. There exists a subset $Y_s$ such that the first return map $f_s|_{Y_s}: Y_s \to Y_s$ satisfies
\[
f_s |_{Y_s} = \psi_s^{-1} \circ f_t \circ \psi_s.
\]
where $\psi_s$ is the map of similarity such that $Y_s \mapsto X_t$. 
\end{theorem}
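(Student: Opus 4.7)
The plan is to prove the theorem by breaking into the three cases specified by the definition of $R$ and constructing the renormalization domain $Y_s$, the similarity $\psi_s$, and the return-time combinatorics explicitly in each regime. In every case the underlying strategy is the same: locate a sub-parallelogram $Y_s \subset X_s$ that is similar to $X_t$ (so that $\psi_s$ is forced to be the unique orientation-preserving similarity sending $Y_s$ to $X_t$), show that almost every orbit of $f_s$ returns to $Y_s$ after a uniformly bounded number of steps, and verify that the induced partition of $Y_s$ by return time matches, under $\psi_s$, the defining partition $\mathcal A$ of $f_t$ on $X_t$. Because translation vectors scale linearly under a similarity with ratio $\lambda$, once the partitions are matched, the conjugacy $f_s|_{Y_s} = \psi_s^{-1} \circ f_t \circ \psi_s$ reduces to checking that each return vector in $Y_s$ is $\lambda$ times the corresponding translation vector for $f_t$.

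For the middle regime $s \in [1/2, 2/3)$ with $t = R_2(s) = (1-s)/s$, I would expect $Y_s$ to be the sub-parallelogram of $X_s$ obtained by shrinking along the short side by factor $t/s$ and anchoring at the appropriate corner dictated by the cut-and-paste operation of Section~\ref{cut_and_paste}. Here the lattices $L_0, L_1, L_2$ act on $F_0, F_1, F_2$ with enough room that each orbit of $g_s$ visits $Y_s$ at most a small bounded number of times between returns; tracing the three-step map $g_s^3$ restricted to $Y_s$ should directly reveal the partition type of $f_t$. This is the cleanest case and should be done first, to pin down conventions for $\psi_s$ and orientations.

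For $s \in (0, 1/2)$ with $t = R_1(s)$, I would use the fact that $R_1(s) = s/(1 + s - s\lfloor 1/s \rfloor)$ depends on $n := \lfloor 1/s \rfloor \ge 2$, which signals a sub-case structure: in $F_i$ the lattice $L_i$ forces orbits to traverse roughly $n$ copies of a thin strip before the triangular multigraph cycle brings them back to a region similar to $X_t$. Here $Y_s$ will be a long thin sub-parallelogram, and the first-return decomposition of $Y_s$ should be indexed by an integer $0 \le k \le n$ recording how many $L_i$-translations a point undergoes before the cycle closes. The verification that these pieces, pushed forward by $\psi_s$, reassemble into $\mathcal A$ for the parameter $t$ is the combinatorial core of the proof. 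Finally, for $s \in [2/3, 1)$ the map $R = R_1 \circ R_2$ permits a two-stage construction: first apply the $R_2$-renormalization to pass to an intermediate parameter $s' = R_2(s) \in (0, 1/2)$, then apply the $R_1$-renormalization for $s'$; the domain $Y_s$ is the pullback of the $R_1$-domain under the first similarity, and the similarities compose.

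The main obstacle will be the $R_1$ case, since the dependence on $n = \lfloor 1/s \rfloor$ forces one to verify the conjugacy uniformly in $n$. Concretely, I anticipate spending most of the work on enumerating the return-time pieces of $Y_s$, checking that their translation vectors are the $\lambda_s$-rescalings (with $\lambda_s = 1 + s - sn$) of the translation vectors appearing in the partition $\mathcal A$ for $f_t$, and handling the boundary orbits carefully enough that the similarity $\psi_s$ is a genuine conjugacy on the complement of a measure-zero set where $f_s$ is undefined. The remaining two cases should then fall out by comparatively direct reductions, provided the lattice/fundamental-domain verifications promised after the construction of the $F_i$ and $L_i$ are in place.
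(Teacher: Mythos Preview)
Your three-case split mirrors the definition of $R$, but the paper's proof is organized quite differently, and the difference exposes a gap in your plan for $s\in[2/3,1)$.

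Your composition argument for $[2/3,1)$ presupposes that for such $s$ there is already a sub-parallelogram on which the first return of $f_s$ is conjugate to $f_{s'}$ with $s'=R_2(s)\in(0,1/2)$. But the only $R_2$-renormalization you will have in hand is the one for $s\in[1/2,2/3)$; nothing in that argument carries over automatically to $s\ge 2/3$, where the defining partition of $f_s$ is genuinely different (one maximal piece degenerates at $s=2/3$, so the return-time combinatorics change). The sentence ``first apply the $R_2$-renormalization'' is therefore an unproved dynamical assertion, not a reduction to previously handled cases. You would have to establish a separate $R_2$-type renormalization on $[2/3,1)$, which is essentially as hard as the theorem on that interval; the fact that $R=R_1\circ R_2$ as maps on parameters does not by itself give a factorization of the first-return dynamics.

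The paper's route is: verify the theorem directly on three base intervals $[1/2,2/3)$, $[2/3,3/4)$, $[3/4,4/5)$, each by a finite computer-assisted calculation in the three-dimensional fiber bundle $\mathcal X=\{(x,y,s):(x,y)\in X_s\}$; this is how the continuum of parameters is reduced to finitely many polyhedron-containment and volume checks, a point your ``by hand'' sketch for the middle regime does not address. It then runs two inductions. Induction~I uses the auxiliary map $T(s)=(2s-1)/s$ (\emph{not} $R_2$) to carry the base case forward along $[\tfrac{n-1}{n},\tfrac{n}{n+1})\to[\tfrac{n}{n+1},\tfrac{n+1}{n+2})$, covering all of $[2/3,1)$; the key observation is that the partition $\mathcal A_s$ and its translation vectors, expressed in the quantities $a_s=1-s$ and $b_s=2s-1$, transform by a simple scaling under $s\mapsto T(s)$ once one collapses certain rhombi, so the first-return maps on $Y_s$ and $Y_{T(s)}$ are conjugate by a homothety. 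Induction~II uses $T(s)=s/(1+s)$ to push $[1/2,1)$ down through $[\tfrac1n,\tfrac1{n-1})\to[\tfrac1{n+1},\tfrac1n)$ and cover $(0,1/2)$. Your $R_1$-case sketch (return pieces indexed by $n=\lfloor 1/s\rfloor$) is in the right spirit for this last step, but the paper's one-step reduction $s\mapsto s/(1-s)$ avoids having to verify the conjugacy uniformly in $n$, which you correctly identify as the main obstacle in your approach.
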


\begin{figure}[h!]
\centering
\includegraphics[scale=0.54]{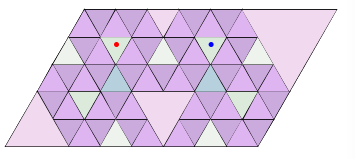}
\includegraphics[scale=0.56]{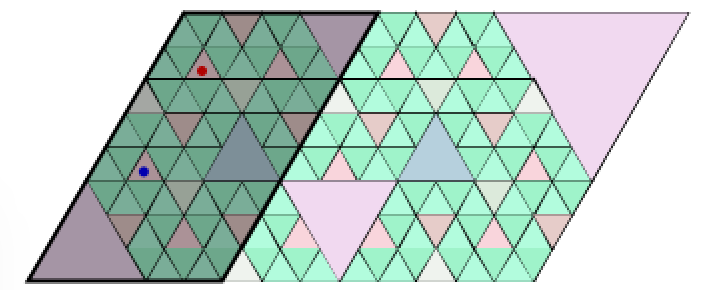}
\caption{The top figure is an illustration of $f_t$ for $t=5/8=R(8/13)$. Given a point (red) in $X_t$, its image is shown blue. The figure on bottom shows $Y_s$ (lightly shaded) for $s=8/13$. The image of the given point (red) under the first return $f_s|_{Y_s}$ is shown in blue. The first return map $f_s|_{Y_s}$ is conjugate to $f_t$ by a similarity.}
\vspace{3.5em}
\includegraphics[scale=0.6]{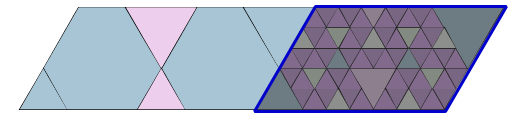}
\caption{$Y_s$ lightly shaded for $s=5/18$ and $R(5/18)=5/8$}
\vspace{3.5em}
\includegraphics[scale=0.6]{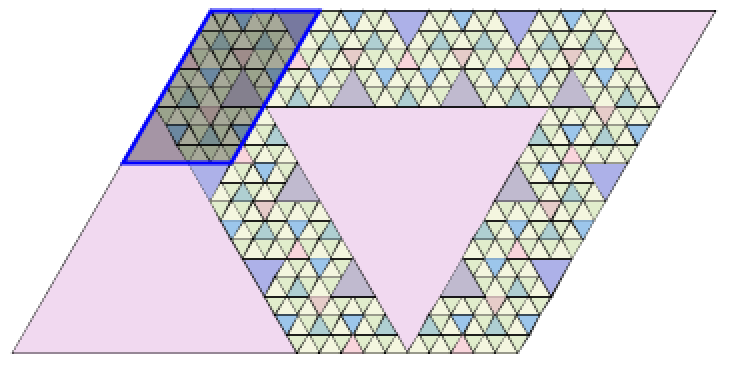}
\caption{$Y_s$ lightly shaded for $s=18/23$ and $R(18/23)=5/8$}
\end{figure}

The renormalization theorem allows us to deduce corollaries on the limit set $\Lambda_\phi$ for $\phi=\frac{\sqrt 5-1}{2}$. Note that $\phi$ is the only fixed point under the renormalization map $R$.  The limit set $\Lambda_\phi$  can be obtained by a sequence of substitutions on the isosceles trapezoids $A, B$ and  $C$ in $X_\phi$ as shown Figure 6-9. 

\begin{theorem}
The limit set $\Lambda_\phi$ is the limit of embedded polygons in $\R^2$.
\end{theorem}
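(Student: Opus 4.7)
Since $\phi = \frac{\sqrt 5 - 1}{2}$ is the unique fixed point of the renormalization map $R$, Theorem \ref{main} applied at $s = \phi$ yields a proper subset $Y_\phi \subset X_\phi$ and a similarity $\psi_\phi : Y_\phi \to X_\phi$ satisfying $f_\phi|_{Y_\phi} = \psi_\phi^{-1} \circ f_\phi \circ \psi_\phi$. Writing $\sigma = \psi_\phi^{-1}$, a strict contraction with ratio $\lambda < 1$, the conjugacy immediately gives $\sigma(\Lambda_\phi) = \Lambda_\phi \cap Y_\phi$, while $X_\phi \setminus Y_\phi$ is a finite union of periodic tiles. This is the self-similar structure I would exploit: iterating, the sets $\sigma^n(X_\phi)$ form a nested decreasing sequence, each containing $\Lambda_\phi$ and built from $\lambda^n$-scaled copies of $X_\phi$ placed inside $X_\phi$ according to the substitution rule read off from the embedding $Y_\phi \subset X_\phi$.

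Next I would translate this dynamical self-similarity into a geometric substitution on the three isosceles trapezoids $A, B, C$ from Figures 6--9, which together tile $X_\phi$. Each of $A, B, C$ is partitioned by $\sigma$ into a specific finite collection of $\lambda$-scaled trapezoids of types $A, B, C$ together with some carved-out periodic tiles; this assignment is the substitution. Define polygons $P_n \subset X_\phi$ inductively by $P_0 = A \cup B \cup C = X_\phi$ and by letting $P_{n+1}$ be the result of applying the substitution rule inside each cell of $P_n$. By construction $P_{n+1} \subset P_n$, every cell of $P_n$ has diameter $O(\lambda^n)$ and contains a similar copy of $\Lambda_\phi$, hence $\Lambda_\phi \subset P_n$ for all $n$. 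Since any point of $P_n$ lies within distance $O(\lambda^n)$ of its cell's copy of $\Lambda_\phi$, it follows that $d_H(P_n, \Lambda_\phi) \to 0$ in the Hausdorff metric, and in fact $\bigcap_n P_n = \Lambda_\phi$.

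The main obstacle is verifying that each $P_n$ is \emph{embedded}: that its cells have pairwise disjoint interiors and that its boundary is a finite disjoint union of simple closed polygonal curves, with no crossings introduced at any stage of the substitution. This reduces to a combinatorial matching check: whenever two trapezoids in $P_n$ share an edge $e$, the $\lambda$-scaled pieces that $\sigma$ produces along $e$ on each side must glue edge-to-edge without overlap. I would handle this by an explicit case analysis of the action of $\sigma$ on each of $A, B, C$, reading the boundary matchings directly from the picture of $Y_\phi \subset X_\phi$ used in the proof of Theorem \ref{main}; the rigidity imposed by $\phi$ being the exact fixed point of $R$ forces the contraction ratio and relative placements to be fully determined, so the number of distinct local boundary configurations is finite and the check is purely combinatorial. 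With embeddedness established at every level, the conclusion $P_n \to \Lambda_\phi$ then exhibits $\Lambda_\phi$ as the Hausdorff limit of embedded polygons in $\R^2$.
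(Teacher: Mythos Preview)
Your overall strategy---build a nested sequence of polygonal unions from a substitution and show it converges to $\Lambda_\phi$---is the paper's strategy as well. But there is a real gap in how you propose to obtain the substitution.

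The assertion that $X_\phi \setminus Y_\phi$ is a finite union of periodic tiles is false, and your argument collapses without it. If it held, then $\Lambda_\phi \subset Y_\phi = \sigma(X_\phi)$, so $\Lambda_\phi = \Lambda_\phi \cap Y_\phi = \sigma(\Lambda_\phi)$, forcing $\Lambda_\phi$ to be the unique fixed point of the strict contraction $\sigma$---a single point. In reality $Y_\phi$ is just the left sub-parallelogram of $X_\phi$ of width $2(1-\phi)$, and its complement contains plenty of aperiodic points whose orbits visit $Y_\phi$ but do not start there. Consequently the single map $\sigma$ does not by itself give an IFS with attractor $\Lambda_\phi$, and the sentence ``each of $A,B,C$ is partitioned by $\sigma$ into \ldots'' has no content: $\sigma$ sends all of $X_\phi$ into one parallelogram, it does not subdivide $A$.

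What the paper actually uses to get the three-branch substitution on $A$ is the combination of renormalization with the \emph{symmetry lemmas} of Section~5, which are proved by a separate computer-assisted computation on the fiber bundle over $[8/13,13/21]$: the first-return dynamics on $A$, $B$, $C$ are conjugate via the rotation $\mu_\phi$ and the reflection $\nu_\phi$. With these in hand one shows that $A$ decomposes as two periodic triangles together with $A_1=\mu_\phi\circ\sigma(A)$, $A_2=\mu_\phi\circ\sigma^2(A)$, and $A_3=\iota_1(A_1)$, each carrying a similar copy of $\Lambda_\phi|_A$; this is the IFS with ratios $\phi,\phi^2,\phi$. Your plan to ``translate'' the self-similarity coming from $\sigma$ alone into this picture skips precisely the symmetry input, and without it you cannot recover the three smaller trapezoids inside $A$. (Two minor points: $A\cup B\cup C$ is not all of $X_\phi$---three fixed triangles are left over---and the paper does not carry out the simple-closed-curve ``embeddedness'' check you describe; it simply proves $\bigcap_n \mathcal C_n=\Lambda_\phi$ for the nested trapezoid unions $\mathcal C_n$.)
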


The substitutions generate an iterated function system of similarities, and the limit set $\Lambda_\phi$ is a self-similar set. We compute the Hausdorff dimension of $\Lambda_\phi$.
\begin{theorem} 
The Hausdorff dimension of the limit set $\Lambda_\phi$ satisfies the property:
\[
\dim_H(\Lambda_\phi) = \frac{\log(-1+\sqrt 2)}{\log \phi}=1.83147\dots<2.
\]
\end{theorem}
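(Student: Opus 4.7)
The plan is to exploit Theorem 1.2, which realizes $\Lambda_\phi$ as the limit of a sequence of embedded polygons obtained by iteratively substituting the isosceles trapezoids $A$, $B$, $C$. Each substitution rule replaces a trapezoid of type $i \in \{A,B,C\}$ by an essentially disjoint union of smaller similar copies of trapezoids of types $A$, $B$, $C$, contracted by factors inherited from the similarity $\psi_\phi$ of Theorem 1.1 at the fixed point $s = \phi$. This data packages into a graph-directed iterated function system (GDIFS) of similarities on $\R^2$ whose unique attractor is $\Lambda_\phi$, and the Hausdorff dimension then follows from the Mauldin--Williams theorem once the contraction ratios and substitution matrix are pinned down.

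The first step is to read off the substitution from the geometry at $s = \phi$. Since $\phi \in [1/2,2/3)$, the renormalization is given by $R_2$, and $R_2(\phi) = (1-\phi)/\phi = \phi$; this produces an explicit similarity $\psi_\phi$ whose ratio I compute in closed form. Iterating $\psi_\phi$ on the three trapezoidal pieces of $X_\phi$ generates the substitution, and I record it as a $3 \times 3$ nonnegative integer matrix $M$ whose $(i,j)$-entry counts the subtrapezoids of type $j$ inside the substituted trapezoid of type $i$, together with a corresponding table of contraction ratios (all equal, or all powers of $\phi$, depending on the precise geometry).

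The second step is to verify the open set condition, which is immediate: the substitution images have pairwise disjoint interiors at every stage by the embedded-polygon assertion of Theorem 1.2, so the interiors of the trapezoids themselves serve as the open sets. The Mauldin--Williams theorem then asserts that $\dim_H(\Lambda_\phi)$ is the unique $d > 0$ for which the weighted adjacency matrix $M(d)$ has spectral radius $1$. If all contractions share a common ratio $r$, this collapses to $r^d \rho(M) = 1$, i.e.\
\[
\dim_H(\Lambda_\phi) \;=\; \frac{\log \rho(M)}{\log(1/r)}.
\]
A direct eigenvalue computation on the resulting $3 \times 3$ matrix should yield $\rho(M) = \sqrt{2}+1$ (with $r = \phi$), giving the asserted value $\log(\sqrt{2}-1)/\log \phi$.

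The main obstacle is combinatorial rather than analytic: correctly enumerating, from the substitution figures, which subtrapezoids appear in each of the three parent trapezoids and confirming that the contraction ratio really is $\phi$ (as opposed to some other algebraic function of $\phi$ that would spoil the clean logarithm). A miscount in $M$ changes its Perron eigenvalue and hence the dimension; a wrong scale introduces a multiplicative error. A secondary subtlety is to confirm that the GDIFS attractor coincides with $\Lambda_\phi$ itself rather than a proper sub- or superset, which should follow from the density of the periodic pattern in $X_\phi$ together with the exhaustion implicit in Theorem 1.2.
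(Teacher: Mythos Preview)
Your plan is sound in outline but overcomplicates the situation and misidentifies the contraction ratios. The paper does not need a graph-directed IFS at all: the rotational and reflectional symmetries established in Section~5 make the three trapezoids $A$, $B$, $C$ dynamically identical, so the limit set restricted to any one of them, say $\Lambda_\phi|_A$, is already the attractor of an ordinary self-similar IFS $\{\sigma_1,\sigma_2,\sigma_3\}$ acting on $A$. The open set condition is then checked with $V=\operatorname{Int}(A)$, and the standard Moran--Hutchinson formula applies directly. The Mauldin--Williams machinery, the $3\times 3$ matrix, and the spectral-radius computation are all unnecessary once you use the symmetry.

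The more concrete gap is your assumption that all contraction ratios equal $\phi$. From the substitution rule (Section~6.2, Figure~\ref{fig:sub_rule} in the paper), a fundamental trapezoid is replaced by \emph{two} copies at ratio $\phi$ and \emph{one} copy at ratio $\phi^2$. The dimension equation is therefore
\[
2\phi^{d} + \phi^{2d} \;=\; 1,
\]
a quadratic in $u=\phi^{d}$ with positive root $u=\sqrt{2}-1$, giving $d=\log(\sqrt{2}-1)/\log\phi$. Your hoped-for identity $\phi^{d}\cdot(\sqrt{2}+1)=1$ happens to yield the same number, but not for the reason you suggest: there is no integer substitution matrix with Perron eigenvalue $\sqrt{2}+1$ coming out of this geometry. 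The $\sqrt{2}$ enters through the quadratic in $\phi^{d}$, not through a spectral radius. So the ``main obstacle'' you anticipate---getting the ratios right---is exactly where the proposal goes wrong.
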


\begin{corollary}
The limit set $\Lambda_\phi$ has Lebesgue measure zero. 
\end{corollary}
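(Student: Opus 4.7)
The plan is to deduce the corollary directly from the preceding dimension theorem via a standard measure-theoretic fact, with essentially no additional work. Since $\Lambda_\phi \subset X_\phi \subset \R^2$, the relevant Lebesgue measure is two-dimensional, and I only need to rule out positive area.

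The key step is the general observation that for any Borel set $S \subset \R^n$, if $\dim_H(S) < n$, then the $n$-dimensional Hausdorff measure $\mathcal{H}^n(S)$ equals zero. This is immediate from the definition of Hausdorff dimension: one has $\mathcal{H}^s(S) = 0$ for every $s > \dim_H(S)$, and applying this with $s = n$ gives the claim. In our setting, the preceding theorem establishes
\[
\dim_H(\Lambda_\phi) \;=\; \frac{\log(\sqrt{2}-1)}{\log \phi} \;=\; 1.83147\ldots \;<\; 2,
\]
so $\mathcal{H}^2(\Lambda_\phi) = 0$.

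To finish, I would invoke the fact that the two-dimensional Hausdorff measure agrees with two-dimensional Lebesgue measure on $\R^2$ up to a fixed normalization constant. Hence $\mathcal{H}^2(\Lambda_\phi) = 0$ forces the Lebesgue measure of $\Lambda_\phi$ to vanish, which is exactly the statement of the corollary.

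There is no genuine obstacle in this proof: all the substantive work sits in the previous theorem, where the dimension is identified with the exponent coming from the self-similar iterated function system arising from the substitution rules on the trapezoids $A$, $B$, $C$. Once that strict inequality $\dim_H(\Lambda_\phi) < 2$ is in hand, the passage to Lebesgue-null is a one-line measure-theoretic consequence.
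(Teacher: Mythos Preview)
Your proposal is correct and matches the paper's approach: the corollary is stated immediately after the Hausdorff dimension theorem with no separate proof, precisely because it follows at once from $\dim_H(\Lambda_\phi)<2$ via the standard fact that $\mathcal{H}^2$ (hence planar Lebesgue measure) vanishes on sets of Hausdorff dimension strictly below $2$.
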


 \begin{figure}[h]
 \centering
 \includegraphics[scale=0.47]{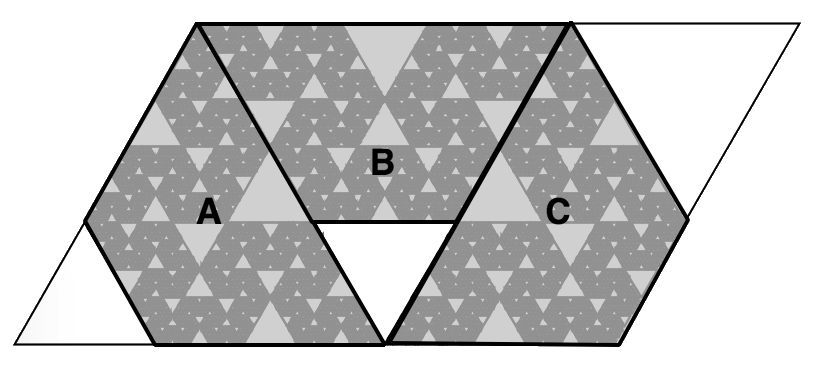}
 \caption{Isosceles trapezoids $A, B, C$ in $X_\phi$}
 \vspace{2em}
\centering
\includegraphics[scale=0.4]{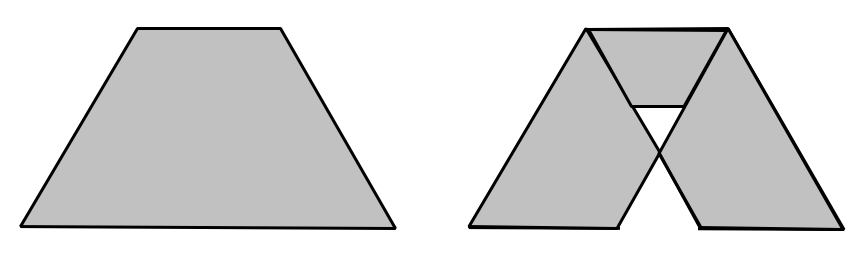}
\caption{Substitution rule of the isosceles trapezoids}
\vspace{2em}
\includegraphics[scale=0.45]{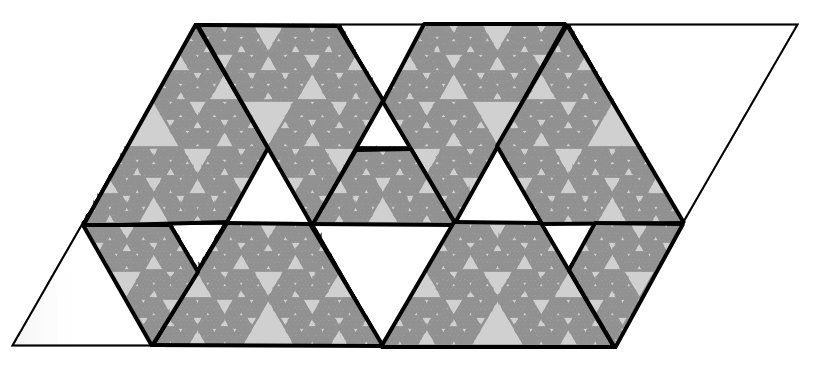}
\includegraphics[scale=0.45]{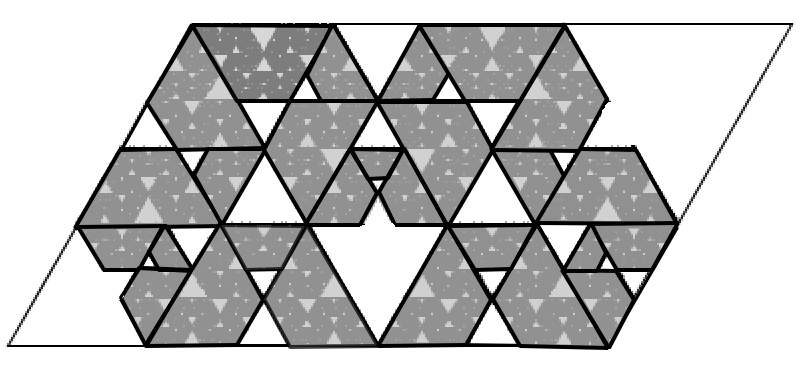}
\caption{The iterations of the substitution rule on trapezoids $A, B$ and $C$}
\vspace{2em}
\includegraphics[scale=0.15]{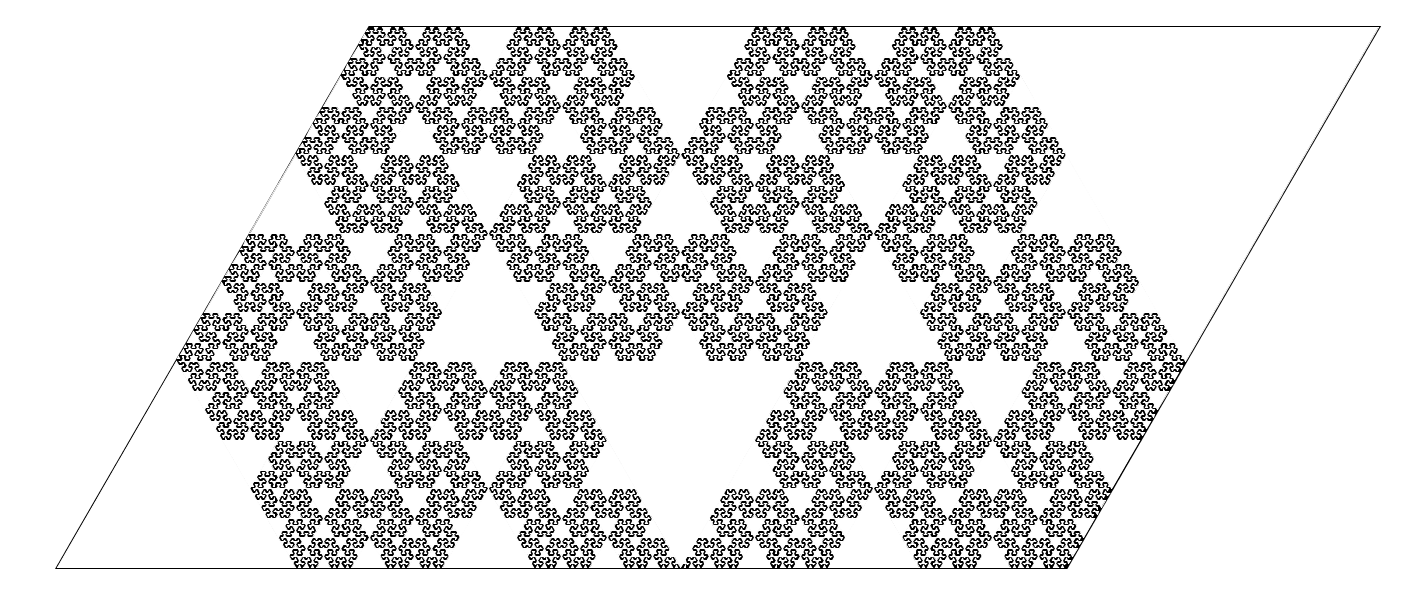}
\caption{The limit set $\Lambda_\phi$}
\end{figure}

\subsection{Outline}
In Section 2, we provide basic definitions and properties related to the triple lattice PETs. Properties of the renormalization map $R$ are discussed in this section. 

Section 3 introduces the fiber bundle method which is a key tool to prove the main theorem. 

In Section 4, we prove the Renormalization Theorem using two inductions on the parameter space. 

Symmetries of the triple lattice map $f_s$ are discussed in Section 5. 

In Section 6, we studies the limit set $\Lambda_\phi$ following a description of the substitution rule on the isosceles trapezoids. We use the substitution to compute the Hausdorff dimension and Lebesgue measure of the limit set $\Lambda_\phi$. 

All computational data are provided in Section 7. 

\subsection{Acknowledgement} The author would like to thank her advisor Professor Richard Schwartz for his constant support, encouragement and guidance throughout this project. The author would also like to thank Patrick Hooper and Yuhan Wang for helpful suggestions at different steps of the paper. 

\clearpage
\newpage
\section{Preliminaries}
\subsection{Fundamental Domains}
We check the fact that each parallelogram $F_i$ is a fundamental domain of $L_{i}$ and $L_{(i-1) \mod 3}$ for $i\in \{0,1,2\}$. According to \cite{S4}, it is sufficient to check two facts: 
\begin{enumerate}
\item The parallelogram $F_i$ and the lattice quotient $\R^2/L_i$ or $\R^2/ L_{(i-1)\mod 3}$ have the same volume. It is easy to see that 
\[
Area(\R^2 / L_j ) = Area(F_i) = 2 \sqrt 3 s.
\]
for $i \in \{0,1,2\}$.
\item The union 
\[
\bigcup_{V \in L} (F_i+V) 
\]
provides a tiling of $\R^n$ where $L=L_i$ or $L=L_{(i-1)\mod 3}$ for $i\in \{0,1,2\}$.

Here we only discuss the case when $i=0$. The same argument can be applied similarly by reflections. Recall that the lattice $L_0$ is generated by the vectors 
\[
V_0=s(1, \sqrt 3) \quad \mbox{and} \quad  V_1 = (-1,\sqrt 3).
\]
and $F_0$ is a parallelogram determined by the vectors $s(1, \sqrt 3)$ and $(2,0)$. First we notice that $F_0$ and $F_0 +V_0$ form adjacent tiles meeting at the top side of $F_0$.  Consider a band $\mathcal S$ between lines $l_1:y= \sqrt 3 x$ and $l_2: y=\sqrt 3 (x-2)$. The union $\displaystyle\bigcup_{m\in \Z}(F_0+ mV_0)$ form a tiling of $\mathcal S$ in $\R^2$. Notice that by applying $V_1$, the band $\mathcal S$ translates to the adjacent band $\mathcal S - (2,0)$. Therefore, the union $\displaystyle\bigcup_{m,n\in \Z}(F_0+ mV_0+nV_1)$ is a tiling in $\R^2$. 

Similarly, the lattice $L_2$ is generated by the vectors $V_0=(2,0)$ and $V_1=s(1, -\sqrt 3)$. The union $\displaystyle\bigcup_{m\in \Z} F_0+mV_0$ form a tiling in the band $\mathcal S$ between the horizontal axis and $y=\sqrt 3 s$. Moreover, The vector $V_1$ translates $\mathcal S$ to $\mathcal S+(0,\sqrt 3)s $. Consequently,  the union $\displaystyle\bigcup_{m,n\in \Z}(F_0+ mV_0+nV_1)$ is a tiling in $\R^2$. 
\end{enumerate}

\subsection{Cut-and-Paste Operation}\label{cut_and_paste}
In this section, we explain the cut-and-paste operation in detail. Suppose that $s \in (0,1)$. Let $a_1$ be the first non-zero integer of the continued fraction expansion of $s$, i.e.
\[
a_1 = \lfloor 1/s \rfloor.
\] 
Let $\mathcal A', \mathcal B'$ be the partitions of $X_s$ defining a triple lattice PET $f'_s$. Let $Y_s \subset X_s$ be a parallelogram (shown as red in the top figure) such that $Y_s$ and $X_s$ share the same lower left vertex $v$. The sides of $Y_s$ are determined by the vectors 
\[
v+(s, \sqrt 3s) \quad \mbox{and} \quad v+(2-2a_1s,0), 
\]
The cut-and-paste operation $\mathcal{OP}$ on the parallelogram $X_s$ can be described as follows:  for each point $p \in X_s$, if $p \in Y_s$, then we translate $p$ by the vector $(2-2a_1s,0)$. Otherwise, translate $p$ by  $(-2a_1s, 0)$.  Note that $X_s$ remains the same after the operation $\mathcal{OP}$. We obtain the modified partitions $\mathcal A$ and  $\mathcal B$ on $X_s$ which produce a new family of PETs $f_s: X_s \to X_s$. 
\begin{figure}[h]
\centering
\includegraphics[scale=0.45]{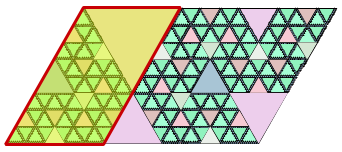}
\includegraphics[scale=0.45]{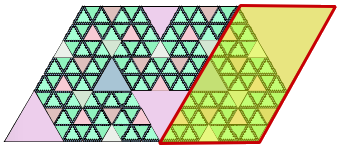}
\caption{The figure on the left shows the periodic pattern $\Delta'_s$ of the map $f'_s$ before the cut-and-paste operation $\mathcal{OP}$ and the figure on the right shows the $\Delta_s$ of $f_s$ after $\mathcal{OP}$.}
\label{fig:cut_paste_pattern}
\end{figure}

\subsection{Algorithm for generating Periodic Tiles} 
Let $\mathcal A_0=\{A_i\}_{i=1}^m$ and $\mathcal B_0=\{B_i\}_{i=1}^m$ be the partitions of polygons such that the map $f_s$ is determined by $\mathcal A_0$ and $\mathcal B_0$. For integers $n \geq 1$, we inductively define $\mathcal A_n$ to be the collection of polygons
\[
f^{-n}(f^n(P) \cap A_i), \quad \mbox{$P \in \mathcal A_{n-1}$ and $A_i \in \mathcal A_0$}.
\]
The partition $\mathcal A_n$ is a refinement of $\mathcal A_{n-1}$. For each $n \geq 2$, the iteration $f^n$ is not defined on 
\[
\bigcup_{P \in \mathcal A_n} \partial P.
\]
Let $P \in \mathcal A_n$ be an open polygon. Every point in $P$ must have the same codings. It follows that if a point $p \in P$ is periodic of period $n$, then all points of $P$ are periodic with period $n$. Remark that each periodic tile is convex because of the fact that intersections of convex polygons are convex. 

\subsection{Renormalization Map}
We discuss the properties of the renormalization map $R: (0,1) \to (0,1)$ defined in section \ref{main} through continued fraction expansions. Suppose that $s \in (0,1)$ is rational and has continued fraction expansion (c.f.e) 
\[
(0; a_1, a_2, \cdots, a_n).
\] 
If $s \in (0,\frac{1}{2})$, then $a_1 \geq 2$ in the c.f.e of $s$. The renormalization map $R$ has the action 
\[
(0; a_1, a_2, \cdots, a_n) \mapsto (0; 1, a_2, \cdots, a_n).
\]
For example, $5/23=(0;4,1,1,2)$ and $R(5/23)=\frac{5}{8}=(0;1,1,1,2)$.

If $s \in [1/2, 2/3)$, then its c.f.e. can be written in the form of $(0;1,1,\cdots, a_n)$. The renormalization map $R$ can be viewed as a shift on the fractional part of the c.f.e., i.e.
\[
(0; 1, 1, a_3, \cdots, a_n) \mapsto (0; 1, a_3, \cdots, a_n).
\]
The parameter $\phi=\frac{-1+\sqrt 5}{2}=(0; 1,1,\cdots)$ is the only fixed point under the renormalization map $R$.

If $s \in [2/3, 1)$, then $a_1=1$ and $a_2 \geq 2$ in the c.f.e.. The map $R$ first shift the fractional part of the c.f.e. to the right, and then reduce $a_2$ to $1$, i.e.
\[
(0; 1, a_2, \cdots, a_n) \mapsto (0; 1, a_3, \cdots, a_n).
\]
Therefore, the renormalization map $R$ has the action 
\[
(\frac{n-1}{n}, \frac{n}{n+1}) \mapsto (\frac{1}{2},1) \quad \mbox{and} \quad (\frac{1}{n}, \frac{1}{n-1}) \mapsto (\frac{1}{2},1). 
\]
Moreover, when $s$ is irrational, all arguments hold true by taking $n \to \infty$. 
\begin{figure}[h]
\centering
\includegraphics[scale=0.5]{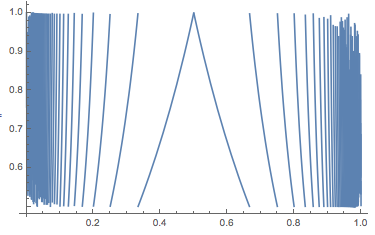}
\caption{The plot of renormalization map $R$ with range [$\frac{1}{2},1$]}
\end{figure}

\subsection{Hausdorff dimension}
In this section, we provide basic definitions related to Hausdorff dimension and a formula to calculate it (see \cite{FA} for reference). 
\begin{definition}
Suppose that $F$ is a compact set of $\R^n$ and $s\geq 0$. Define
\[
H^s_\delta(F)=\inf\{\sum^\infty_{i=1} \mbox{diam}(U_i)^s| \mbox{diam}(U_i) \leq \delta \quad \mbox{and} \quad F \subset \bigcup_{i=1}^\infty U_i\}.
\]
The \textit{Hausdorff measure} is defined to be the limit
\[
H^s(F) = \lim_{\delta\to 0} H^s_\delta(F).
\]
\end{definition}
There is a critical value $s_0$ of $s$ at which $H^s$ jumps from $\infty$ to $0$. The value $s_0$ is called \textit{Hausdorff dimension}. Formally, the Hausdorff dimension $\dim_H F$ of a compact set $F$ is defined as follows:
\begin{definition}
\[
\dim_H F = \inf\{s\geq 0: H^s(F)=0\}=\sup\{s: H^s(F)=\infty\}.
\]
If $s=\dim_H F$, then $0 < H^s(F) <\infty$. 
\end{definition}

Here we state a classical result in \cite{FA} to compute Hausdorff dimensions of self-similarity sets. Let $\{\sigma_0, \sigma_1, \cdots, \sigma_m \}$ be a set of similarities with scaling ratios $0<c_i<1$ for $1\leq i \leq m$. If the following conditions are satisfied:
\begin{enumerate}
\item attractor condition: the set $F$ is an attractor i.e.
\[
F=\bigcup_{i=1}^m \sigma_i(F),
\]
\item open set condition: there exists non-empty bounded open set $V$ such that 
\[
V \supset \bigsqcup_{i=1}^m\sigma_i(V),
\]
\end{enumerate}
then the Hausdorff dimension $\dim_H F=s$, where $s$ is given by
\[
\sum_{i=1}^m c_i^s=1.
\]

\subsection{Computer Assistance}
We give a proof for the main renormalization theorem and symmetrical properties of periodic patterns (see Section 3 and 4) for the map $f_s$ with computer assistance. The proof involves calculations to determine if a given pair of polyhedra are nested or disjoint. We scale all the convex polyhedra so that all the calculations are done in integers or half integers. Hence, there is no roundoff error. The pictures of partitions, periodic patterns and limit sets are taken from my java program. The program also do all the calculations. The program can be downloaded from the URL
\[
https://www.math.brown.edu/\sim renyi/triple\_ lattice/project.html
\]

\section{The Fiber Bundle Picture}\label{fiber}
Suppose $s \in (0,1)$ and $t=R(s)$. We want to show the first return map $f_s|_{Y_s}$ conjugates to the map $f_t$ by a map of similarity $\psi_s$. Since it is impossible to apply calculations on every $s \in (0,1)$, the idea of the proof is to reduce all the calculations to finitely many computations on the fiber bundle $\mathcal X$ which is a convex polyhedron in  $\R^3$. 

The construction is inherited from \cite{S3} in Chapter 26. Define $\mathcal X \in \R^3$ as follows
\[
\mathcal X = \{(x,y,s)| (x,y) \in X_s, s\in [\frac{1}{2}, 1]\}.
\]
The set $\mathcal X$ is a convex polyhedron and a fiber bundle over $[1/2, 1)$ such that the fiber above $s$ is the parallelogram $X_s$. Let $F: \mathcal X \to \mathcal X$ be the fiber bundle map given by the formula:
\[
F(x,y,s) = (f_s(x,y), s).
\]
It is easy to see that $F$ is a piecewise affine map. It is because for each $(x,y) \in X_s$, the map $f_s$ is of the format  
\[
f_s(x,y) = (x,y) + (m_0 s + m_1, (n_0 s + n_1)\sqrt 3)
\]
where $m_i, n_i \in \Z$ for $i=0,1$. If we vary the point $(x,y)$ and the parameter $s$ in a small neighborhood, the integers $m_i, n_i$ for $i=1,2$ will not change. 

\begin{definition}
A \textit{maximal domain} of $\mathcal X$ is a maximal subset in $\mathcal X$ where the fiber bundle map $F$ is entirely defined and continuous. 
\end{definition}

In other words, a maximal domain $D$ of $\mathcal X$ is a maximal subset of $\mathcal X$ such that for all $(x,y,s) \in \Int(D)$, the fiber bundle map $F: \mathcal X \to \mathcal X$ is in the form of
\[
F(x,y,s) = (x, y, s)  + \big( m_0 s + m_1, y+ (n_0s+n_1)\sqrt 3, 0\big)
\]
for the fixed integers $m_i, n_i$ where $i=0,1$. It follows that for each maximal domain $D$, there is a 4-tuple $(m_0, m_1, n_0, n_1)$ encoded the information of its translation vector. We call it the \textit{coefficient tuple} of the maximal domain $D$.  

The fiber bundle $\mathcal X$ is partitioned into 12 maximal domains $D_i$ for $i=0,\cdots, 11$. The vertices of the maximal domains are of the form 
\[
(\frac{1}{2}\frac{a}{q}, \frac{\sqrt 3}{2} \frac{b}{q}, \frac{p}{q}), \quad \mbox{for $a,b,p,q \in \Z$}
\]
and 
\[
(p,q) \in \{(1,2), (2,3), (1,1)\}.
\] 
The list of maximal domains along with their coefficient tuple is provided in Section \ref{data}.  

\begin{remark}
\begin{enumerate}
\item All maximal domains except for  $D_7$ have at least one vertices with $z$-coordinate greater than 2/3. Formally, we consider the subset $\mathbb E =\{(x,y,z): z\geq 2/3\}$ of $\R^3$. For $i=0,\cdots, 12$, the Lebesgues measure $\mu(D_i \cap \mathbb E)$ equals to $0$ when $i=7$ and strictly greater than $0$ otherwise. We say  $D_7$ \textit{degenerates} on the interval [2/3,1). 
\item The union of cross sections obtained by intersecting the maximal domains and the plane $z=s$ gives a partition $\mathcal A$ on $X_s$ which determines the triple lattice map $f_s$. 
\end{enumerate}
\end{remark}

\section{Proof of the Renormalization Theorem}
\subsection{Outline of the Proof}
The proof of the renormalization theorem relies on two inductions. We provide an outline here.  
\begin{itemize}
\item[--] We frist show that the main theorem holds true when $s\in [\frac{1}{2},\frac{2}{3})$ by applying calculations on 12 maximal domains in the fiber bundle $\mathcal X$. 
\item[--] By the similar technique, we check the renormalization theorem on the two other base cases when $s \in [2/3, 3/4)$ and $s \in [\frac{3}{4}, \frac{4}{5})$.

\item[--] We apply Induction I on the intervals of the format $[\frac{n-1}{n}, \frac{n}{n+1})$ for $n\geq 2$ to show that Theorem \ref{main} holds true for all $s \in [\frac{1}{2},1)$. More precisely, we find that for $s \in [\frac{n-1}{n}, \frac{n}{n+1})$, the first return map  $f_s|_{Y_s}$ is similar to $f_t|_{Y_t}$ for some $t \in [\frac{n}{n+1}, \frac{n+1}{n+2})$. 
Therefore, we have renormalization scheme proved on the intervals
\[
[\frac{3}{4}, \frac{4}{5}) \mapsto [\frac{4}{5}, \frac{5}{6}) \mapsto [\frac{5}{6}, \frac{6}{7}) \mapsto \cdots. 
\]
By taking the union
\[
\bigcup_{n\geq 2} [\frac{n-1}{n}, \frac{n}{n+1})=[\frac{1}{2},1),
\]
Theorem \ref{main} can be shown for $s \in [\frac{1}{2}, 1)$.

\item[--]  We apply induction II to verify Theorem \ref{main} when the parameter $s \in [\frac{1}{n}, \frac{1}{n-1})$ for all $n\geq 2$. We show that the periodic pattern for $s \in [\frac{1}{n}, \frac{1}{n-1})$ and $t \in [\frac{1}{n+1}, \frac{1}{n})$ are same up to scaling and adding on one central parallelogram. By this observation, we have renormalization scheme proved on the intervals
\[
[\frac{1}{2}, 1) \mapsto [\frac{1}{3}, \frac{1}{2}) \mapsto [\frac{1}{4}, \frac{1}{3}) \mapsto \cdots. 
\]
Consequently, the renormalization theorem is shown for $s\in (0,1)$ that is the union
\[
\bigcup_{n\geq 2}[\frac{1}{n}, \frac{1}{n-1})=(0,1).
\]
\end{itemize}

\subsection{Base Case 1.}\label{base}
Suppose $s \in I_0 =[1/2, 2/3)$ and $t=R(s) \in (\frac{1}{2}, 1]$. Now we define $Y_s$ (Figure \ref{base1}) which is the set for the first return map. Let $Y_s \subset X_s$ be the parallelogram determined by the vectors
\[
V_0+(s, \sqrt 3 s), \quad \mbox{and} \quad V_0+(2-2s,0). 
\]
where $V_0=(-\frac{s}{2}-1,-\frac{\sqrt 3s}{2}) \quad$ is the lower left vertex of $X_s$. 
\vspace{1em}
\begin{figure}[h]
\centering
\includegraphics[scale=0.55]{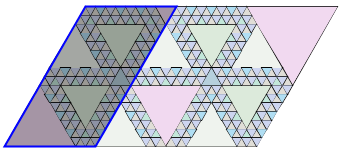}
\includegraphics[scale=0.55]{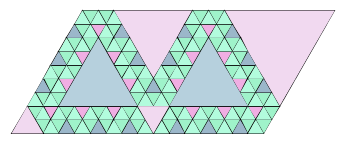}
\caption{$Y_s$ (lightly shaded) for $s=\frac{16}{25}$ and $\Delta_t$ for $t=R(s)=\frac{9}{16}$}
\label{base1}
\end{figure}

Before giving the explicit formula, we give an informal description of the similarity $\psi_s$  in Theorem \ref{main} mapping $Y_s$ to $X_t$. The map $\phi_s$ first translates the set $Y_s$ to center at the origin. Then, rotate the obtained parallelogram $Y_s$ counterclockwisely by $2\pi/3$ around the origin. Flip the obtained shape about the horizontal axis. Finally, scale the resulting parallelogram by $1/s$. Hence, the formula of $\psi_s: Y_s \to X_t$ for $s\in I_0$ is given as follows:
\[
\begin{bmatrix}
x \\
y
\end{bmatrix} \mapsto \frac{1}{s}
\begin{bmatrix}
-1/2 & -\sqrt 3/2 \\
-\sqrt 3 / 2 & 1/2
\end{bmatrix}
\begin{bmatrix}
x+s\\
y
\end{bmatrix}.
\]
Define the polyhedron $\mathcal Y \subset \mathcal X$ as follows
\[
\mathcal Y = \{(x,y,s)| (x,y \in Y_s), s \in I_0\}.
\]
The polyhedron $\mathcal Y$ is a fiber bundle over $I_0$ such that the fiber above $s$ is the parallelogram $Y_s$. We define the affine map $\Psi: \mathcal Y \to \mathcal X$ by piecing together all the similarities $\psi_s$ for $s \in I_0$
\[
\Psi(x,y,s) = \Big (\psi_s(x,y),R(s)\Big ).
\]
Note that the map $R$ has the action that 
\[
I_0=[1/2, 2/3) \mapsto (1/2, 1].
\]
The inverse map $\Psi^{-1}$ is given by the formula:
\[
\Psi^{-1}(x,y) = \Big( \psi^{-1}_s(x,y), \frac{1}{1+s}\Big ).
\]

Here we give the main calculation steps of the proof. For each maximal domain $D_i$ in $\mathcal X$, we check the following properties by computer:

\begin{enumerate}
\item There exists an integer $n > 0$ such that 
\[
F^n \circ \Psi^{-1}(D_i) \subseteq \mathcal Y 
\]
and 
\[
 F^{k} \circ \Psi^{-1}(D_i) \cap \mathcal Y = \emptyset, \quad \mbox{for all $0 \leq k < n$}.
\]
It means that the first return map on $\mathcal Y$ is well-defined and the fiber bundle map $F$ returns to the polyhedron $\mathcal Y$ as $F^n$. 
 
\item $F^n \circ \Psi^{-1} (D_i) \subseteq  \Psi^{-1} \circ F(D_i)$
\item Pairwise disjointness: set $M_i = F^n \circ \Psi^{-1}(D_i)$. 
\[
\quad \Int(M_i) \cap \Int(M_j) = \emptyset \quad \mbox{for $i \neq j$}.
\]
\item Filling: 
\[
\sum_{i=0}^{11} \text{Volume}(M_i)  = \text{Volume}(\mathcal Y).
\]
\end{enumerate}
The above computation shows that the first return map $F|_{\mathcal Y}$ on $\mathcal Y$ satisfies the equation 
\[
F|_{\mathcal Y} = \Psi^{-1} \circ F \circ \Psi.
\]
Therefore, the renormalization theorem on the first base case $I_0=[1/2, 2/3)$ has been proved.
\qed\\

\subsection{Base case 2 and 3} Consider $I_1=[2/3, 3/4)$ and $I_2=[3/4, 4/5)$. Suppose $s \in [2/3, 3/4)$ or $[3/4, 4/5)$. In these cases, we want to apply the similar proof as the one in the previous section. One difference is that the subset $Y_s$ is chosen differently than before. We define the subset $Y_s \subset X_s$ for $s\in I_j$ for $i=1,2$ as follows. Suppose $s \in I_j$ for $j \in \{1,2\}$. Let $V_0=(\frac{s}{2}-1, \frac{\sqrt 3s}{2})$ be the top left vertex of $X_s$. Consider a parallelogram $Y_s$ determined by the vectors 
\[
 V_0+ (2a_s, 0) \quad \mbox{and} \quad V_0 - b_s(1, -\sqrt 3) 
\]
where 
\[
a_s= 1- s \quad \mbox{and}  \quad b_s = 1 - \lfloor \frac{s}{a_s} \rfloor a_s.
\]
\begin{figure}[h]
\centering
\includegraphics[scale=0.45]{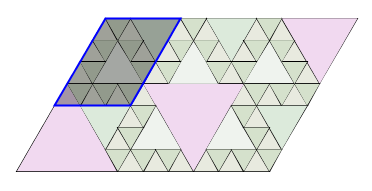}
\includegraphics[scale=0.45]{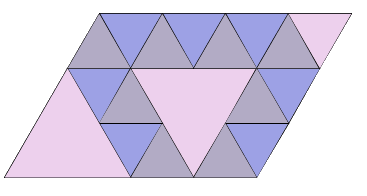}
\caption{$Y_s$ (lightly shaded) for $s=7/10$ and $\Delta_t$ for $t=R(s)=3/4$}
\label{base2}
\end{figure}

Here we show that the renormalization theorem holds true for $s\in I_1$ and $s \in I_2$.  
\begin{lemma}
Suppose $s \in I_j$ and $t = R(s)$ for $j\in \{1,2\}$. The first return map $f_s|_{Y_s}$ on $Y_s$ conjugates to $f_t$ by a map of similarity $\psi_s$. 
\end{lemma}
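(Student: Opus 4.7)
The plan is to mirror the fiber-bundle argument of Base Case 1, adapted to the intervals $I_1 = [2/3, 3/4)$ and $I_2 = [3/4, 4/5)$. For each $j \in \{1,2\}$, I would first write down an explicit affine similarity $\psi_s : Y_s \to X_t$ with $t = R(s)$. Since on $[2/3,1)$ the renormalization map factors as $R = R_1 \circ R_2$, a natural ansatz for $\psi_s$ is a composition of a rigid element of $D_6$ (which repositions the parallelogram $Y_s$ defined by the vectors $2a_s(1,0)$ and $-b_s(1,-\sqrt 3)$ emanating from the top-left vertex $V_0$ of $X_s$) followed by a dilation whose scaling factor matches the side-lengths of $X_t$. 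The constants $a_s = 1-s$ and $b_s = 1 - \lfloor s/a_s\rfloor a_s$ are chosen precisely so that after this similarity the image is the standard parallelogram determining $X_t$.

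Next I would assemble the fiber bundle
\[
\mathcal Y_j = \{(x,y,s) \mid (x,y) \in Y_s,\ s \in I_j\}
\]
and piece the one-parameter family of similarities into a single piecewise affine map
\[
\Psi_j(x,y,s) = \bigl(\psi_s(x,y),\, R(s)\bigr).
\]
On each of $I_1$ and $I_2$ the integer $\lfloor s/a_s \rfloor$ is constant, so $b_s$ is linear in $s$ there, $\mathcal Y_j$ is a convex polyhedron, and $\Psi_j$ is affine. This is exactly why the lemma must treat $I_1$ and $I_2$ separately: the constant $\lfloor s/a_s \rfloor$ jumps at $s = 3/4$.

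With $\Psi_j$ in hand, I would run the same four computer-assisted checks used in Base Case 1, applied to each of the $12$ maximal domains $D_i \subset \mathcal X$ (noting that $D_7$ degenerates over $[2/3,1)$ and therefore contributes nothing). For each $D_i$ I look for the smallest $n_i > 0$ with $F^{n_i} \circ \Psi_j^{-1}(D_i) \subseteq \mathcal Y_j$ and no earlier return; verify the containment $F^{n_i} \circ \Psi_j^{-1}(D_i) \subseteq \Psi_j^{-1} \circ F(D_i)$; check pairwise disjointness of the resulting pieces $M_i = F^{n_i} \circ \Psi_j^{-1}(D_i)$; and confirm the volume identity $\sum_i \text{Vol}(M_i) = \text{Vol}(\mathcal Y_j)$. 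Because all vertices involved lie in $\tfrac12 \mathbb Z[1/q]\times \tfrac{\sqrt 3}{2}\mathbb Z[1/q]$ with small denominators, these are exact rational calculations. Together the four checks imply $F|_{\mathcal Y_j} = \Psi_j^{-1} \circ F \circ \Psi_j$, which is the fiber-bundle form of the claim.

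The main obstacle is that the return time under $F$ is now larger than in Base Case 1 because $R$ absorbs both an $R_2$ flip and an $R_1$ step. Consequently, each preimage $\Psi_j^{-1}(D_i)$ must be chased forward under $F$ through several iterates, and at each step a cell may split across the partition into maximal domains; managing this refinement while staying strictly off the singular set where $F$ is undefined is the delicate bookkeeping that makes the computer check nontrivial. A secondary nuisance is that the similarity $\psi_s$ appropriate for $I_1$ differs from the one for $I_2$ (reflecting the jump of $\lfloor s/a_s\rfloor$), so the verification must be carried out twice, and only afterwards do the two fiber-bundle conjugacies glue to yield the lemma as stated.
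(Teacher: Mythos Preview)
Your proposal is correct and follows essentially the same approach as the paper: define the fiber bundle $\mathcal Y(I_j)$ over each interval, write down the explicit similarity $\psi_s$ (the paper gives the scaling factor as $1/b_s$ with $b_s=(j+1)s-j$, matching your $1-\lfloor s/a_s\rfloor a_s$), assemble the affine $\Psi$, and rerun the four computer checks from Base Case~1 on the maximal domains. One minor remark: the paper's check structure indicates that each $\Psi^{-1}(D_i)$ returns to $\mathcal Y(I_j)$ as a single piece with a well-defined return time, so the splitting-and-refinement bookkeeping you anticipate does not actually arise.
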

We want to use the argument as the one in the previous section. Similarly, we define the polyhedron $\mathcal Y(I_j)$ as a subset of $\mathcal X$ over the interval $I_j$, i.e.
\[
\mathcal Y(I_j) = \{(x,y,s)| (x,y) \in Y_s, s\in I_j\} \quad \mbox{for} \quad j=1,2.
\]
Suppose $s \in I_j$ and $t=R(s) \in (1/2, 1]$. The map of similarity $\psi_s: Y_s \to X_t$ is defined similarly as the in the Section 3.1 but with a different scaling factor $1/b_s$ for $b_s=(j+1)s-j$. Let $s\in I_j$ for $j\in \{1, 2\}$, the formula of $\psi_s: Y_s \to X_t$ is given by the following formula: 
\[
\begin{bmatrix}
x \\
y \\ 
\end{bmatrix} \mapsto \frac{1}{(j+1)s-j}
\begin{bmatrix}
-1/2 & -\sqrt 3/2 \\
-\sqrt 3 / 2 & 1/2
\end{bmatrix}
\begin{bmatrix}
x-\frac{j-(j+3)s}{4}\\
y-\frac{\sqrt 3j(1-s)}{4}
\end{bmatrix}
\]
The affine map $\Psi: \mathcal Y(I_j) \to \mathcal X$ and its inverse $\Psi^{-1}$ are given by
\[
\Psi(x,y,s)=(\psi_s(x,y), R(s)), \quad \Psi^{-1}(x,y,s)=(\psi_s^{-1}(x,y), \frac{1+sj}{1+s(j+1)}).
\]
By following the same computation in base case 1, we verify the renormalization theorem for $s \in I_1$ and $I_2$.

\subsection{Induction I}\label{ind1}
Consider  the map $T: [2/3, 1) \to [1/2, 1)$ given by
\[
T(s) = \frac{2s-1}{s} 
\]
and its inverse map 
\[
T^{-1}(s)=\frac{1}{2-s}. 
\]
The map $T^{-1}$ has action 
\[
[\frac{3}{4}, \frac{4}{5}) \mapsto [\frac{4}{5}, \frac{5}{6}) \mapsto \cdots \mapsto [\frac{n-2}{n-1}, \frac{n-1}{n}) \mapsto [\frac{n-1}{n}, \frac{n}{n+1}) \mapsto \cdots.
\]
It follows that 
\[
[\frac{1}{2}, 1) = [\frac{1}{2},\frac{2}{3}) \cup [\frac{2}{3}, \frac{3}{4}) \cup \bigcup_{k=0}^\infty T^{-k} [\frac{3}{4}, \frac{4}{5}).
\]

\begin{lemma}[Induction I]
If the Theorem \ref{main} is true for $t \in [\frac{n-1}{n}, \frac{n}{n+1})$ for $n \geq 4$, then it also holds for $s=T^{-1}(t) \in [\frac{n}{n+1}, \frac{n+1}{n+2})$.
\end{lemma}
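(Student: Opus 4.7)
The plan is to use Theorem \ref{main} for $t$ as one complete layer of renormalization, reducing $f_s$ on a suitable subset $W_s \subset X_s$ to $f_t$ on $X_t$ via a similarity, and then composing with the conjugacy from $f_t$ down to $f_{R(t)}$ supplied by the inductive hypothesis. The key arithmetic fact enabling this composition is that $R(s)$ and $R(t)$ coincide.

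First I would verify $R(s)=R(t)$ directly from the definition of $R$. Since $s=1/(2-t)\in [n/(n+1),(n+1)/(n+2))\subset [2/3,1)$ with $n\geq 4$, we have $R(s)=R_1(R_2(s))$ with $R_2(s)=(1-s)/s=1-t$, so $\lfloor 1/R_2(s)\rfloor=\lfloor 1/(1-t)\rfloor=n$. Similarly $t\in [(n-1)/n,n/(n+1))\subset [2/3,1)$ gives $R(t)=R_1(R_2(t))$ with $R_2(t)=(1-t)/t$ and $\lfloor 1/R_2(t)\rfloor=\lfloor t/(1-t)\rfloor=n-1$. Substituting into the formula for $R_1$, both expressions collapse to
\[
R(s)=R(t)=\frac{1-t}{(n-1)t-(n-2)}.
\]

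The main step is a ``one-step'' conjugacy: construct a parallelogram $W_s\subset X_s$ and a similarity $\phi_s:W_s\to X_t$ satisfying $f_s|_{W_s}=\phi_s^{-1}\circ f_t\circ \phi_s$. The natural candidate is the direct generalization of base cases 2 and 3 to arbitrary $n$: let $W_s$ be the parallelogram based at the top-left vertex of $X_s$ with side vectors $(2(1-s),0)$ and $-b_s(1,-\sqrt 3)$, where $b_s=ns-(n-1)$ (consistent with $b_s=1-\lfloor s/(1-s)\rfloor(1-s)$ and $\lfloor s/(1-s)\rfloor=n$), and let $\phi_s$ be the rotation-reflection-scale used in the base cases, now with scaling factor $1/b_s$. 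To verify the conjugacy I would assemble the $W_s$ and $\phi_s$ over the full interval into a polyhedron $\mathcal{W}_n\subset \mathcal{X}$ and an affine map $\Phi_n$, then check the four conditions (return time, containment, pairwise disjointness, filling) on each maximal domain of $\mathcal{X}$ restricted to the strip $z\in [n/(n+1),(n+1)/(n+2))$, exactly as in base case 1. Granted this one-step conjugacy, the lemma is immediate: with $Y_t\subset X_t$ and $\psi_t:Y_t\to X_{R(t)}$ supplied by the inductive hypothesis, set $Y_s:=\phi_s^{-1}(Y_t)$ and $\psi_s:=\psi_t\circ \phi_s$; then $f_s|_{Y_s}=\psi_s^{-1}\circ f_{R(s)}\circ \psi_s$ follows by combining the one-step conjugacy with $R(s)=R(t)$.

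The main obstacle is making the fiber-bundle verification uniform in $n$. Although $\mathcal{X}$ has only 12 global maximal domains, the first return time of $f_s$ to $W_s$ grows with $n$, so the refinement $\mathcal{A}_{k(n)}$ needed to read off the conjugacy on $W_s$ depends on $n$ and no single finite computer check works for all $n$. I expect the resolution to be an auxiliary induction on $n$ identifying a self-similar combinatorial pattern: a generic orbit of $f_s$ starting in $W_s$ traverses a chain of $O(n)$ adjacent parallelograms outside $W_s$, each obtained from its neighbor by the same fixed lattice translation, so that the verification reduces to (i) a finite computer check at base level (using base case 3 at $n=3$), and (ii) an induction step that prepends one more parallelogram to the chain and observes that the return dynamics are unchanged.
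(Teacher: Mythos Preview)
Your arithmetic $R(s)=R(t)$ is correct and is indeed what makes the induction go through. The structural plan (reduce to $t$, then invoke the hypothesis) also matches the paper's. But your realization of the ``one-step'' layer is mis-aimed: the parallelogram $W_s$ you write down, with $b_s=ns-(n-1)$ and the base-case rotation-reflection-scale by $1/b_s$, is sent to the parallelogram with horizontal side $2$ and diagonal side $2(1-s)/b_s=2R(s)$, i.e.\ to $X_{R(s)}$, \emph{not} to $X_t$. (In the base cases the symbol $t$ denotes $R(s)$; in Induction~I it denotes $T(s)=(2s-1)/s$, and these are different.) So the conjugacy $f_s|_{W_s}=\phi_s^{-1}\circ f_t\circ\phi_s$ is not well-posed as written, and if you redirect $\phi_s$ to $X_{R(s)}$ you are trying to prove the full renormalization for $s$ from scratch, at which point the inductive hypothesis for $t$ is never used and you are back to the uniform-in-$n$ obstacle you identified.

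The paper avoids this by \emph{not} seeking any $W_s$ with $f_s|_{W_s}\sim f_t$. Instead it compares the two first returns $f_s|_{Y_s}$ and $f_t|_{Y_t}$ directly, via a plain homothety $\xi_s$ of ratio $1/s$ (no rotation or reflection) taking the top-left vertex of $X_s$ to that of $X_t$; since $a_t=a_s/s$ and $b_t=b_s/s$ one has $\xi_s(Y_s)=Y_t$. The content of the argument is a uniform ``collapse'' picture: parametrizing the eleven pieces $A_i(s)$ of $\mathcal A_s$ (for $s\in[2/3,1)$) and their translation vectors explicitly, one sees that away from the three fixed triangles $\mathcal A_s$ is obtained from the $s$-scaled copy of $\mathcal A_t$ by inserting one rhombus $Z_i(s)$ of side $2a_s$ into each of the quadrilateral pieces $A_3,A_5,A_9$, with $f_s(Z_i)=Z_i'$ adjacent to $Z_i$. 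Collapsing the $Z_i'$ yields a set $\bar X'_s$ with induced partition similar (ratio $1/s$) to $\mathcal A_t$ on $\bar X_t$; hence partner points have matching itineraries and $f_s|_{Y_s}$ is conjugate to $f_t|_{Y_t}$. This works for all $n\ge 4$ at once, with no fiber-bundle check and no auxiliary induction. The idea you are missing is precisely this insertion/collapse relation between $\mathcal A_s$ and $\mathcal A_t$, together with the observation that the right comparison is first-return-to-first-return via a homothety rather than first-return-to-full-map via the base-case similarity.
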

\begin{figure}[h]
\centering
\includegraphics[scale=0.43]{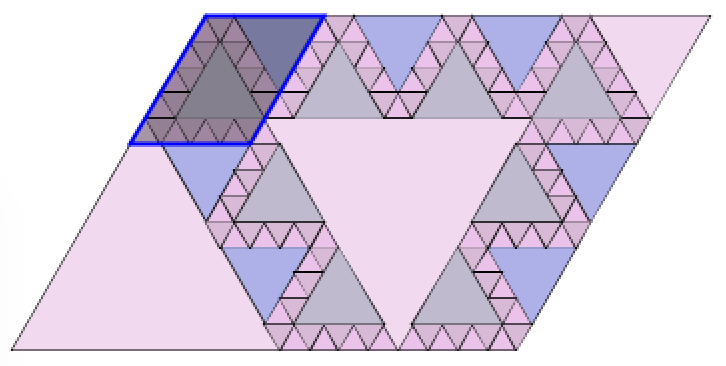}
\includegraphics[scale=0.43]{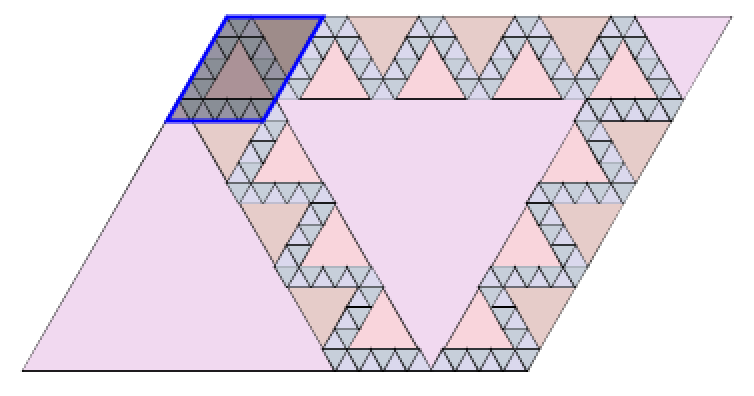}
\caption{$Y_t$ for $t=13/17$ and $Y_s$ for $s=T^{-1}(t)=17/21$} 
\includegraphics[scale=0.43]{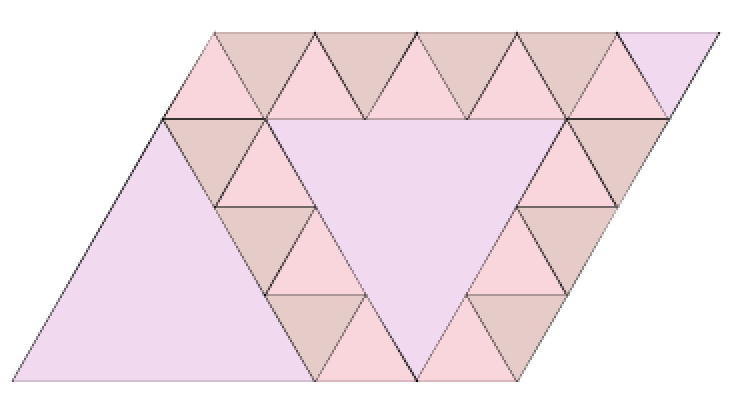}
\caption{$\Delta_u$ for $u=R(s)=R(t)=4/5$}
\end{figure}

It is equivalent to prove that the following diagram commutes. 
\[
\begin{CD}
Y_s @>f_s|_{Y_s}>> Y_s\\
@VV \xi_s V @VV \xi_s V\\
Y_t @>f_t|_{Y_t}>> Y_t
\end{CD}
\]
The map $\xi_s$ is a homothety (scaling and translation) with the scaling ratio $1/s$ such that $Y_s \mapsto Y_t$. The explicit formula of $\xi_s$ is given by
\[
\xi_s(x,y) = \frac{1}{s}(x-x_s, y-y_s) + (x_t, y_t),
\]
where $(x_s, y_s)=(\frac{s}{2}-1, \frac{\sqrt 3 s}{2})$ is the top left vertex of $X_s$ and the point $(x_t, y_t)$ is defined similarly. 

The idea of the proof is to modify the map $f_s$ in order to shorten the return time for the point $p \in Y_s$, and then show that the modified map of $f_s$ is conjugate to to the map $f_t$ via a similarity. Therefore, the statement of the first return map follows directly. To see this, we parametrize each element of the partition $\mathcal A_s$ and its translation vector. 

\begin{proof}
Let $\mathcal A_s=\{A_i(s)\}_{i=1}^m$ be a partition on $X_s$ which determine the map $f_s$ and $V_s=\{V_i(s)\}_{i=1}^m$ be a set of translation vectors such that 
\[
f_s(x)=x+V_i(s) \quad \mbox{for $x\in \Int(A_i)$}.
\]
According to previous section, we already know that there are 12 maximal domains $D_i$ in the fiber bundle $\mathcal X$ over the interval $[1/2, 1)$ and the maximal domain $D_7$ degenerates on the interval $[2/3,1)$. It follows that, there are 11 elements $A_i(s)$ for the partition $\mathcal A_s$ for $s\in [2/3, 1)$. We parametrize these 11 maximal domains $D_i$ and their translation vectors $V_i$ by the parameter $s \in (2/3, 1)$ for $i=\{0, 1, \cdots, 11\}$ and $i\neq 7$. 
\begin{figure}[h]
\centering
\includegraphics[scale=0.57]{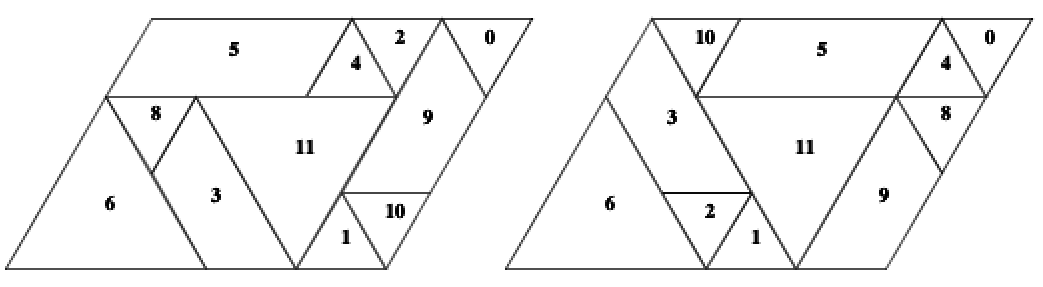}
\includegraphics[scale=0.55]{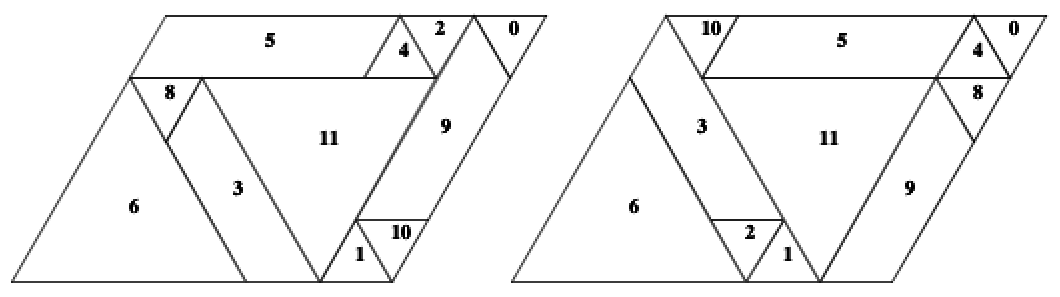}
\caption{The partition $\mathcal A_t$ and $\mathcal B_t$ of the triple lattice PET $f_s$ for $t=13/17$ (top); The partition $\mathcal A_s$ and $\mathcal B_s$ of the triple lattice PET $f_s$ for $s=17/21$ (bottom); }
\label{param}
\end{figure}

Let $(m_0,m_1, n_0, n_1)\in \Z^4$ be a coefficient tuple of a maximal domain $D_i$ in $\mathcal X$ (Section \ref{fiber}). Recall that for every point $(x,y,s)\in D_i$, 
\[
F(x,y,s)=(x,y,s)+(m_0s+m_1, (n_0s+n_1)\sqrt 3, 0).
\]
Let 
\[
a_s=1-s \quad \mbox{and}  \quad b_s=s-a_s=2s-1.
\]
Note that 
\[
a_t=1-t=1-\frac{2s-1}{s} \quad \Rightarrow \quad a_t=\frac{a_s}{s}, 
\]
\[
b_t= 2t-1=\frac{3s-2}{s} \quad \Rightarrow \quad b_t=\frac{b_s-a_s}{s}.
\]
According to the translation vectors (or coefficient tuples), we can divide the elements $A_i(s) \in \mathcal A_s$ for $i=\{0,1, \cdots, 11\}$ and $i\neq 7$ into three different classes:
\begin{enumerate}
\item The element $A_i(s)$ is fixed. When $i=0,6,11$, the set $A_i(s)$ is a trivial periodic tile given by the map $f_s$. Each $A_i(s)$  is an equilateral triangle. The parametrization of each triangle is listed below:
\[
A_0(s): \quad v= (\frac{1}{2}s+1, \frac{\sqrt 3}{2} s), \quad v-(2a_s, 0), \quad v-a_s(1, \sqrt 3),
\]
\[
A_6(s): \quad v=(-\frac{1}{2}s-1,-\frac{\sqrt 3}{2}s), \quad v+(2b_s,0), \quad v+b_s(1, \sqrt 3), 
\]
\[
A_{11}(s): \quad v=(-\frac{1}{2}s, (-\frac{3}{2}s+1)\sqrt 3), \quad v+(2b_s,0), \quad v+b_s(1, \sqrt 3).
\]
We restrict our attention on the parametrization of the elements in $\mathcal A_s$ which belongs to the subset 
\[
\bar{X}_s = X_s\setminus \bigcup_{i=0,6,11} A_i(s).
\]

\item For $i \in \{1, 4\}$, the translation vector $V_i(s)$ of $A_i(s)$ are of the form $2a_1\omega$ for $\omega$ is a unit vector in the direction of cubic root of unity. 
\[
V_1(s)= a_s(s)(2,0) \quad \mbox{and} \quad V_4(s)=-a_s(2,0).
\]
The element $A_i(s)$ are equilateral triangles of side length $2a_s$. 
\[
A_1: \quad v_0=(\frac{1}{2}s+1, \frac{\sqrt 3}{2}s), \quad v_0+(2a_s,0), \quad v_0+a_s(1, \sqrt 3).
\]
\[
A_4: \quad v_0=(\frac{7}{2}s-2, (\frac{3}{2}s-1)\sqrt 3), \quad v_0+(2a_s,0), \quad v_0+a_s(1, -\sqrt 3).
\]
Since $a_i(t)=\frac{1}{s}a_i(t)$, we have
\[
V_i(t)=\frac{1}{s}V_i(s),
\]
and $A_i(s), A_i(t)$ are same up to similarity with a scaling factor $1/s$. 

\item For $i\in \{2,8,10\}$, let $\Lambda_s$ be the lattice generated by the sides of the parallelogram $X_s$. The translation vector $V_i(s)$ are of the form $2a_s\omega \mod \Lambda_s$ where $\omega$ is a vector in the direction of cubic root of unity.  More precisely, 
\[
V_2(s)=a_s(-1,\sqrt 3) \mod \Lambda_s, \quad V_8(s)=a_s(-2,0) \mod \Lambda_s
\]
\[
V_{10}(s)=a_s(-1,-\sqrt 3) \mod \Lambda_s.
\]
Therefore, 
\[
V_i(t) = \frac{1}{s} V_i(s), \quad \mbox{for $i=2,8,10$}. 
\]
Each $A_i(s)$ is an equilateral triangle with a top horizontal side. The side length of each triangle is $2a_s$. The top left vertex $u_i$ of $A_i(s)$ is provided here:
\[
u_2=(\frac{5}{2}s-1,\frac{\sqrt 3}{2}s), \quad u_8=(\frac{3}{2}s-2,(\frac{3}{2}s-1)\sqrt 3), \quad u_{10}=(\frac{1}{2}s, (-\frac{3}{2}s+1)\sqrt 3).
\] 

\item For $i\in \{3,5,9\}$, $A_i(s)$ is a quadrilateral with translation vector $2a_s\omega$ where $\omega$ is a unit vector in the direction of cubic root of unity. The vertices and translation vector $V_i(s)$ of $A_i(s)$ are given as follows for $i=3,5$ or $9$:
\[
A_3(s): \quad
v_0=(\frac{7}{2}s-3,-\frac{\sqrt 3}{2}s ), \quad v_1=v_0-(2a_s,0), \quad v_0+b_s(-1,\sqrt 3)
\]
\[
v_1+(b_s-a_s)(-1,\sqrt 3), \qquad \mbox{ $V_3(s)=a_s(-1,\sqrt 3).$}
\]
The set $A_5(s)$ is a parallelogram determined by the vectors 
\[
 v_0+(2b_s,0), \quad v_0+a_s(-1,-\sqrt 3) \quad \mbox{where $v_0=(\frac{1}{2}s-1, \frac{\sqrt 3}{2}s)$}.
\]
The translation vector $V_5(s)=a_s(1,0)$. The vertices of $A_{9}(s)$ are given as follows:
\[
v_0=(\frac{5}{2}s-1, \frac{\sqrt 3}{2}s), \quad v_0+b_s(-1, \sqrt 3), \quad v_0+a_s(1, -\sqrt 3 s), 
\]
\[
v_1= (b_s-a_s)(-1,-\sqrt 3), \qquad \mbox{$V_9(s)=a_s(-1,-\sqrt 3).$}
\]
Consider the rhombus  $Z_i(s) \subset A_i(s)$ of side length $2a_s$ for $i=3, 5,10$ as shown below in blue in Figure \ref{reduct}. Note that $A_i(s) \setminus Z_i(s)$ and $A_i(t)$ are same up to a scaling factor $s$. Denote the translation image $Z_i(s)+V_i(s)$ by $Z'_i(s)$. Note that the sets $Z'_i(s)$ and $Z_i(s)$ are adjacent in $A_i(s)$. 

Let $e_i$ be the adjacent edge between the rhombi $Z_i$ and $Z'_i(s)$. Collapse the rhombus $Z'_i$ towards $e_i$ (Figure \ref{collapse}), and we obtain a new set $\bar X'_s$ such that $X_t$ and $\bar X'_s$ are same up to a scaling by $1/s$. Moreover, there is an induced partition $\mathcal A'_s=\{A'_i\}_{i=0}^{10}$ of $\bar X'_s$ (Figure \ref{collapse}) i.e.
\[
A'_i(s)=\left\{
\begin{array}{l l}
A_i(s) & \quad \mbox{if $i=8$}\\
A_i(s)-(2a_s,0) & \quad \mbox{if $i=2, 4$}\\
A_i(s)+a_s(1, \sqrt 3)-(2a_s,0) & \quad \mbox{if $i=1,10$}\\
A_i(s)\setminus Z'_i(s)-V_i & \quad \mbox{if $i=3,5,9$}.
\end{array}
\right.
\]
\begin{figure}[h]
\centering
\includegraphics[scale=0.47]{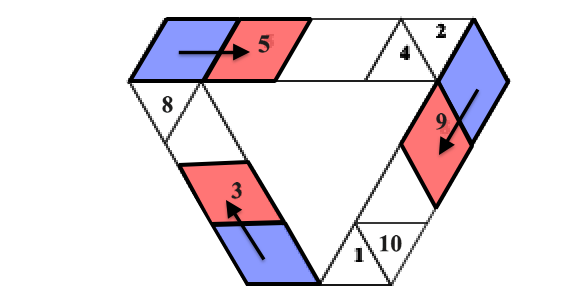} 
\caption{the set $Z_i(s)$ (blue) and its image $f_s(Z_i(s))$ (red) in $\bar X_s$}  
\label{reduct}
\end{figure}

\begin{figure}[h]
\centering
\includegraphics[scale=0.45]{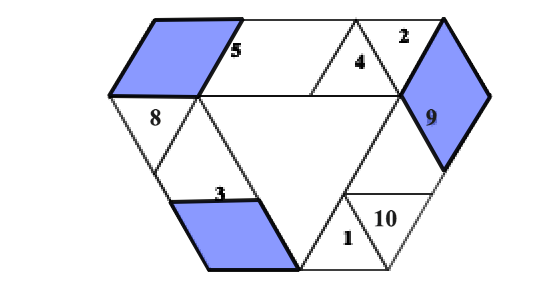}
\caption{The set $\bar X'_s$ for $s=17/21$ is similar to $X_t$ for $t=13/17$.}
\label{collapse}
\end{figure}
\end{enumerate}

We extend the map $\xi_s$ to the set $\bar X'_s$. Thus,  we have
\[
\bar X'_t = \xi_s(\bar X'_s) \quad \mbox{and} \quad A_i(t) = \xi_s(A'_i(s)).
\]
It follows that the return time for the points in $p \in Y_s$ will be equal to or longer than the return time of the  point $\xi(p) \in Y_t$. Since the vectors $V_i(s)$ and $sV_i(t)$ are same up to some translations by the vectors in $\Lambda_s$ or $\Lambda_t$, the images $f_s|_{Y_s}(p)$ and $f_t|_{Y_t}(\xi(p))$ must be in the corresponding positions. Hence, we have shown the first return maps $f_s|_{Y_s}$ and $f_t|_{Y_t}$ conjugates by the map $\xi$.
\end{proof}

\subsection{Induction II}
Suppose $s \in (0,\frac{1}{2})$. Let $T: (0,1) \to (0,1)$ be the map given by 
\[
T(s) = \frac{s}{1+s} \quad \mbox{and} \quad T^{-1}(s) = \frac{s}{1-s}.
\]
The map $T$ has the action 
\[
[\frac{1}{2}, 1) \mapsto [\frac{1}{3}, \frac{1}{2}) \mapsto \cdots \mapsto [\frac{1}{n+1}, \frac{1}{n}) \mapsto [\frac{1}{n+2}, \frac{1}{n+1}) \mapsto \cdots. 
\]
Therefore, 
\[
(0,1) = \bigcup_{n\geq 0} T^n([\frac{1}{2}, 1)).
\]

Suppose $s \in (0,\frac{1}{2})$. Let $a_1 \geq 2$ be the first non-zero digit of the continued fraction expansion of $s$, then we have the relation 
\[
R (s) = T^{1-a_1}(s).
\] 
Define $\Diamond^0_k$ (Figure 17) to be the rhombus of side length $2s$ centered at  
\[
((1-2k)s +1,0) \quad \mbox{for} \quad k=1,\cdots, a_1-1.
\]
The sides of $\Diamond_k$ are parallel to the vectors $(2,0)$ and $(s, \sqrt 3 s)$. Define 
\[
Y'_s = X_s \setminus \bigcup_{k=1}^{a_1-1} \Diamond_k^0.
\]
Then, we obtain the subset $Y_s$ in the Renormalization Theorem by applying the cut-and-paste operation $\mathcal{OP}$ (Section 2.2) on the parallelogram $X_s$. 

Similarly, we define $\Diamond^1_k$ as the rhombus of side length $2s$ centered at 
\[
(2ks-1,0) \quad \mbox{for} \quad  k=1, \cdots, a_1-1 .
\]
The sides of $\Diamond^1_k$ are parallel to the vectors $(2,0)$ and $(-s, \sqrt 3 s)$. We call the rhombus $\Diamond^0_k$ or $\Diamond^1_k$ the \textit{central tile}. Let $\Diamond_{s}$ be the union of all central tiles in $X_s$, i.e.
\[
\Diamond_s = \bigcup_{k=1}^{a_1-1} (\Diamond^0_k \cup \Diamond^1_k).
\]
The union $\Diamond_s$ remains the same under the triple lattice PET $f'_s$. Define $\bar Y'_s$ to be the complement 
\[
\bar Y'_s = X_s \setminus \Diamond_{s}.
\] 
\begin{figure}[h]
\centering
\includegraphics[scale=0.56]{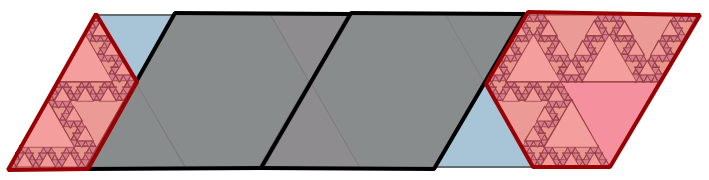}
\includegraphics[scale=0.55]{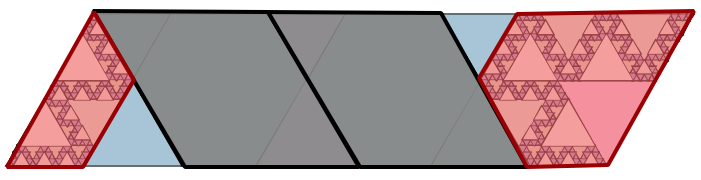}
\caption{$\Diamond^0_k$ (shaded) on the left, $\Diamond^1_k$ (shaded) on the right for $k=1,2$ and $\bar Y'_s$ (red)}
\end{figure}

\begin{figure}[h]
\centering
\includegraphics[scale=0.55]{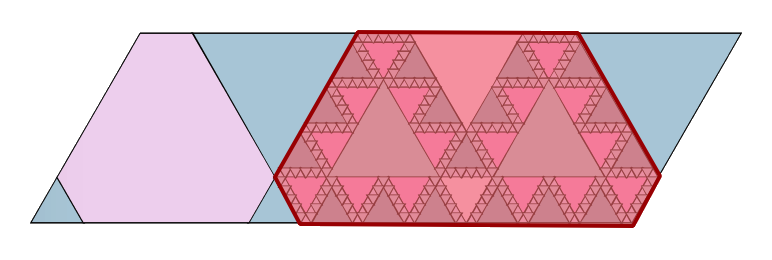}
\includegraphics[scale=0.57]{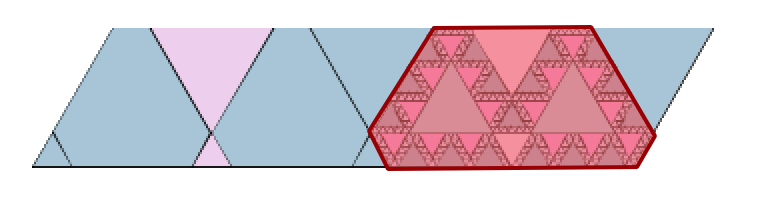}
\caption{ The set $\bar Y_t$ (red) for $t=21/58$ is shown on the left and $\bar Y_s$ (red) for $s=21/79$ is on the right. Here we provide a pictorial illustration of Induction II.}
\end{figure}

\begin{lemma}[Induction II]
If Theorem \ref{main} is true for some $u \in [\frac{1}{n}, \frac{1}{n-1})$ for $n \geq 2$, then it holds true for $s=T(u)\in [\frac{1}{n+1}, \frac{1}{n})$.
\end{lemma}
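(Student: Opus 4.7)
The plan is to construct an explicit similarity $\xi_s : Y_s \to Y_u$, where $u = T^{-1}(s) = s/(1-s)$, such that the first return map $f_s|_{Y_s}$ conjugates to $f_u|_{Y_u}$ via $\xi_s$. Combined with the similarity $\psi_u : Y_u \to X_{R(u)}$ supplied by the inductive hypothesis, and using the identity $R(T(u)) = R(u)$ which follows from the continued-fraction description of $R$ established earlier in Section 2, the composite $\psi_s := \psi_u \circ \xi_s$ will then be the similarity required by Theorem \ref{main} at parameter $s$.

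First I would verify that each rhombus $\Diamond^0_k$ and $\Diamond^1_k$ (for $1 \le k \le a_1 - 1$) is a union of period-$1$ tiles for $f'_s$, so that $f'_s$ acts as the identity on $\Diamond_s$. Concretely, one checks that the unique vector in the lattice $L_i$ translating each central rhombus from $F_i$ into $F_{(i+1)\bmod 3}$ is consistent across the three edges of the triangular multigraph, so that $g_s^3$ returns each central rhombus to itself pointwise. This confirms the observation in the excerpt that $\Diamond_s$ is invariant under $f'_s$ and rules out any nontrivial motion inside $\Diamond_s$ in the first-return dynamics on $Y_s$.

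Since $a_1(s) = a_1(u) + 1$, passing from $u$ to $s = T(u)$ introduces exactly one additional pair of central rhombi. I would write down the homothety $\xi_s : Y_s \to Y_u$ explicitly from the vertex coordinates given just above the lemma; the natural scaling ratio is $u/s = 1/(1-s)$. The content of Induction II is then that $\xi_s$ sends the non-central pieces of the partition $\mathcal A_s$ lying in $\bar Y'_s$ bijectively onto those of $\mathcal A_u$ lying in $\bar Y'_u$, and that this bijection intertwines the translations used by $f'_s$ and $f'_u$. One then checks that this correspondence survives the cut-and-paste operation $\mathcal{OP}$, so that $\xi_s$ descends to a similarity from $Y_s$ onto $Y_u$ conjugating $f_s|_{Y_s}$ to $f_u|_{Y_u}$. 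The intuition is that, between consecutive visits to $Y_s$, a point $p$ only traverses central tiles on which $f'_s$ is the identity, so the effective translation picked up on $Y_s$ matches, after scaling by $\xi_s$, the translation picked up by $\xi_s(p)$ on $Y_u$.

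Composition with the inductive hypothesis then gives $f_s|_{Y_s} = \psi_s^{-1} \circ f_{R(s)} \circ \psi_s$ with $\psi_s = \psi_u \circ \xi_s$, completing the induction. The main obstacle will be the bookkeeping in the previous paragraph: matching the partitions and translation vectors on $\bar Y'_s$ versus $\bar Y'_u$ piece by piece and checking that the global shifts applied by $\mathcal{OP}$ at parameters $s$ and $u$ are compatible under $\xi_s$. In the spirit of the preceding inductions, I would reformulate this as a finite collection of equalities of polyhedra in $\mathbb{R}^3$ in the fiber-bundle framework of Section \ref{fiber}, which reduces the problem to finitely many cases that can be verified with computer assistance.
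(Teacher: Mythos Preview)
Your overall strategy matches the paper's: exploit $R(s)=R(u)$, show the central rhombi are $f'_s$-fixed, and then conjugate the dynamics on $\bar Y'_s$ and $\bar Y'_u$ by a similarity with ratio $1/(1-s)$, finally transporting through the cut-and-paste operation. That part is sound.

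The gap is in your final paragraph. The fiber-bundle machinery of Section~\ref{fiber} is set up over $[1/2,1]$, where the partition $\mathcal A_s$ has a fixed finite number of pieces. For $s\in[\tfrac{1}{n+1},\tfrac{1}{n})$ the number of pieces of $\mathcal A_s$ (and hence of maximal domains in any fiber bundle over that interval) grows with $n$, so there is no single finite collection of polyhedral checks that handles the inductive step for all $n$ at once. The paper avoids this by \emph{not} working with $f_s$ or its partition directly. Instead it drops down to the single lattice translation $g_s$ (recall $f'_s=g_s^3$) and introduces the notion of \emph{partners}: points $p_s\in\bar Y'_s$ and $p_u\in\bar Y'_u$ in corresponding positions relative to the central diamonds, linked by a piecewise similarity $\alpha_s$ with two branches (one for each connected component $W^0_s,W^1_s$ of $\bar Y'_s$). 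The proof then follows a partner pair through $F_0\to F_1\to F_2\to F_0$ under $g_s,g_u$, and at each step does a short case analysis on the difference vector $\nu=v_s-(1-s)v_u$ (three cases per step, depending only on which side of the diamonds the point and its image sit). This gives finitely many cases \emph{independent of $n$}, which is exactly what your computer-verification plan cannot deliver. Replace your last paragraph with this partner-tracking argument at the level of $g_s$ and the proof goes through.
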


\begin{proof}
We prove the statement by studying the triple lattice map $f'_s$ and the map $g_s$ such that $f'_s = g^3_s$. By assumption, the renormalization is true for $u \in (\frac{1}{n}, \frac{1}{n-1})$. Since 
\[
R(u) = R(s) \quad \mbox{for $s=T(u)$},
\]
and every point is fixed under the first return map $f'_s|_{Y'_s}$, we want to show that the restricting maps $f'_s$ on $\bar Y'_s$ and $f'_u$ on $\bar Y'_u$ is conjugate via similarites. Then, by applying the cut-and-paste operation, we obtain the desired statement.  

Let $P_0$ and $P_1$ be polygons in $\R^2$. Let  $Q_i$ be a subset of $P_i$ such that $P_i \setminus Q_i$ has $k \geq 1$ connected components $U^i_j$ for $i=0,1$ and $j=0,\cdots, k$. We say two points $p \in P_0 \setminus Q_0$ and $q \in P_1 \setminus Q_1$ are \textit{partners} if they are in the corresponding positions relative to $Q_0$ and $Q_1$, i.e. there exists a piecewise similarity $\alpha: P_0 \to P_1$ such that $U^0_j \mapsto U^1_j$ for each $j$, and $q = \alpha(p)$. We show that the image of a pair of partners $(p_s, p_u)$ relative to $\Diamond_s$ and $\Diamond_u$ are also partners. 

Let $W^0_s$ and $W^1_s$ be the left and right subsets of $\bar Y'_s$ relative to the central tiles. If $p_s \in \bar Y'_s $ and $p_u \in \bar Y'_u$ are partners relative to $\Diamond_s$ and $\Diamond_u$, then the  associated piecewise similarity $\alpha: X_s \to X_u$ is given by 
\[
\alpha_s(p_s) = \left\{
\begin{array}{l l}
\frac{1}{1-s}(p_s - V^0_s)+ V^0_u \quad \mbox{if $p_s \in W^0_s$}\\
\frac{1}{1-s}(p_s - V^1_s) + V^1_u \quad \mbox{if $p_s \in W^1_s$}
\end{array}\right.
\] 
where $V^0_\omega$ and $V^1_\omega$ are the lower left and right vertices of $X_\omega$ respectively for $\omega\in \{s,u\}$.

Let the points $p_s \in X_s \setminus \Diamond_s$ and $p_u \in X_u \setminus \Diamond_u $ be partners.  Let $v_s$ and $v_t$ be the vectors in $(L_0)_s$ and $(L_0)_u$ (Section 1.3) such that $p_s + v_s \in (F_1)_s$ and $p_u+ v_u \in (F_1)_u$. Depending on the position of right and left respect to the diamonds, the vector $\nu =v_s - (1-s)v_u$ belongs to one of the following three case: 
\begin{enumerate}[1)]
\item  $\nu = (-s, \sqrt 3 s)$ if $p_s$ and $p_u$ belong to left triangle in $Y'_s$ and $Y'_u$ respectively. 
\item $\nu = (-2s, 0)$ if $p_s$ and $p_u$ belongs to the parallelogram right to the central tiles. 
\item $ \nu = (-2s,0)+(-s, \sqrt 3 s)$ otherwise.
\end{enumerate}
It follows that the image $g_s(p_s)$ and $g_u(p_u)$ are in the corresponding positions relative to $g_s(\Diamond_s)$ and $g_u(\Diamond_u)$.

\begin{figure}[h]
\centering
\includegraphics[scale=0.5]{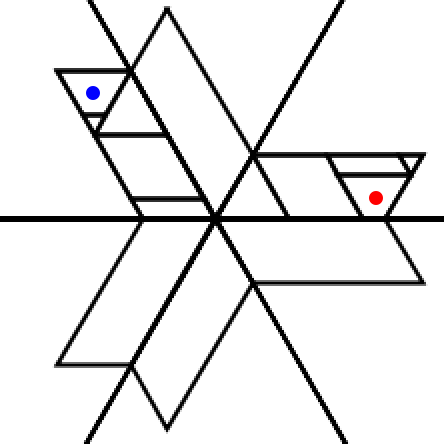}
\hspace{4em}
\includegraphics[scale=0.5]{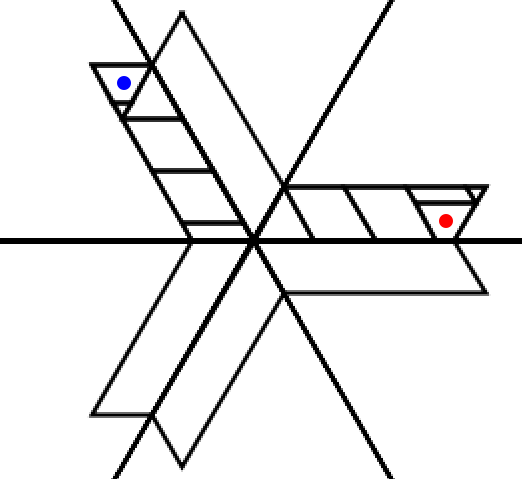}
\caption{Partners $(p_s, p_u)$ (red) and $(g_s(p_s), g_u(p_u))$ for $u=7/17$ and $s=T(u)=7/24$}
\end{figure}

Similarly, a pair of partners $(p_s \in (F_1)_s, p_u \in (F_1)_u)$ relative to $g(\Diamond_s)$ and $g(\Diamond_u)$ becomes partners relative to $g_s^2(\Diamond_s)$ and $g^2_u(\Diamond_u)$ under the maps $g_s$ and $g_u$. Let $\nu = v_s = (1-s) v_u$ where $v_s$ and $v_u$ are defined similarly as before. The vector $\nu=v_s-(1-s) v_u$ is 
\[
(s, -\sqrt 3 s), \quad -(s, \sqrt 3 s) \quad \mbox{and} \quad  (0,-2\sqrt 3 s) = -(s, \sqrt 3 s)+(s, -\sqrt 3 s)
\]
depending on whether or not $p_s, p_u$ and their image lie in the top or bottom of the $F_1$ and $F_2$ respectively. Same argument works for pair of partners in $(F_2)_s$ and $(F_s)_u$ for the vector $\nu = v_s - (1-s) v_u$ is one of the following forms depending on the positions of the points $p_s, p_u$ and their image in the parallelograms
\[
(2s, 0), \quad (s, \sqrt 3 s), \mbox{and} \quad (3s, \sqrt 3 s) = (2s,0) + (s, \sqrt 3 s).
\] 
It follows that the maps $f_s$ and $f_u$ restricting on the sets $\bar Y_s$ and $\bar Y_u$ are conjugate via a map of similarity. Therefore, we have shown the inductive step for the renormalization on the interval $(0,\frac{1}{2})$.
\end{proof}

\section{Symmetry}
In this section, we analyze the symmetries of the periodic pattern $\Delta_s$ for $s \in [8/13, 13/21]$, which will help us to understand the properties of the limit set $\Lambda_\phi$ later.

\subsection{Symmetry 1}\label{sym1}
Let $s \in [8/13, 13/21]$ and $A, B$ and $C$ be isosceles trapezoids in  $X_s$ as illustrated in Figure \ref{sym}. In this section, we will show that the periodic tiles in $A, B$ and $C$ are same up to rotation and reflection. 

Here we give a precise description of the trapezoids $A, B$ and $C$. The vertices of trapezoid $A$ are 
\[
v_0=(-1+\frac{1}{2}s, \frac{\sqrt 3}{2}s), \quad v_1=v_0+s(1, -\sqrt 3)
\]
\[
v_0-(1-s)(1, \sqrt 3), \quad \mbox{and} \quad  v_1-2(1-s)(1,0).
\]
The quadrilateral $A$ is an isosceles trapezoid of base angle $\pi/3$ and side length $2(2s-1)$, $2(1-s)$ and $2s$. For the trapezoid $B$ and $C$, let $\mu_s$ be the map of rotation by $\pi/3$ about the upper left vertex of the parallelogram $X_s$, and $\nu_s$ be the map of reflection about the vertical line $x=\frac{3}{2}s-1$. The trapezoids $B$ and $C$ are obtained by the formulas
\[
B=\mu_s(A) \quad \mbox{and} \quad C=\nu_s(A).
\]
\begin{figure}[h]
\centering
\includegraphics[scale=0.5]{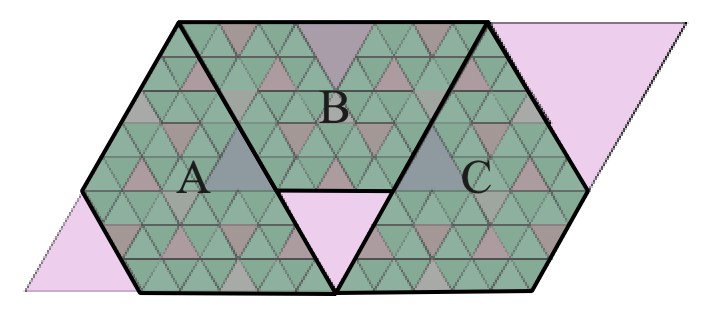}
\caption{$L_s$ (lightly shaded triangle) and $M_s$  (transparent triangle) for $s=8/13$}
\label{sym}
\end{figure}

The following lemma shows that there exists rotational symmetry between periodic pattern restricting in $A$ and $B$, and a reflectional symmetry between the periodic patterns in subsets $A$ and $C$. 
\begin{lemma}
Suppose $s \in [8/13, 13/21]$. Then, the following two equations are satisfied on $M_s$ and $N_s$ respectively:
\begin{enumerate}
\item   
\[
\mu_s \circ f_s|_{A} \circ \mu_s^{-1} = f_s^{-1}|_{B}.
\]
\item
\[
\nu_s \circ f_s|_{A} \circ \nu_s^{-1} = f_s^{-1}|_{C}.
\]
\end{enumerate}
\end{lemma}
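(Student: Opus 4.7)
The plan is to mimic the fiber bundle strategy of Section \ref{fiber} on the parameter interval $J = [8/13, 13/21]$, and verify both symmetry identities by finitely many integer computations on the maximal domains restricted to $J$. Concretely, I would construct fibered polyhedra $\mathcal{A}_J = \{(x,y,s) : (x,y) \in A,\ s \in J\}$ and likewise $\mathcal{B}_J, \mathcal{C}_J$ inside $\mathcal{X}|_J$. Since the pivot $(s/2 - 1,\sqrt{3}s/2)$ of $\mu_s$ and the axis $x = 3s/2 - 1$ of $\nu_s$ depend affinely on $s$, the rotations and reflections assemble into piecewise-affine fiber bundle maps $M,N : \mathcal{X}|_J \to \mathcal{X}|_J$ satisfying $M(\mathcal{A}_J) = \mathcal{B}_J$ and $N(\mathcal{A}_J) = \mathcal{C}_J$. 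The desired identities then read globally as
\[
M \circ F|_{\mathcal{A}_J} \circ M^{-1} = F^{-1}|_{\mathcal{B}_J}, \qquad N \circ F|_{\mathcal{A}_J} \circ N^{-1} = F^{-1}|_{\mathcal{C}_J},
\]
so it suffices to verify them piece by piece over each maximal domain.

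For each of the 12 maximal domains $D_i$, I would intersect $D_i \cap \mathcal{A}_J$ and record its coefficient tuple $(m_0, m_1, n_0, n_1)$. Under the rotation $M$, the translation vector $(m_0 s + m_1,\ (n_0 s + n_1)\sqrt{3})$ transforms via a fixed linear action on $\mathbb{Z}^4$ induced by the action of $\mu_s$ on $\mathbb{R}^2$, and similarly for $N$. The heuristic reason one expects the inverse to appear is that $\mu_s$ and $\nu_s$ both reverse the cyclic orientation of the three lattices $L_0, L_1, L_2$, while $f_s = g_s^3$ traverses them in cyclic order; the conjugated map thus runs through them in reverse order and must agree with $f_s^{-1}$. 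What remains is to check that the transformed coefficient tuple on $M(D_i \cap \mathcal{A}_J)$ matches the tuple of $F^{-1}$ on the maximal domain containing it, and analogously for $N$. This is a finite computer verification, entirely parallel to the ones in Base Case 1 of Section \ref{base}.

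The main obstacle is the preliminary rigidity statement: that the restricted partitions $\mathcal{A}_s \cap A$, $\mathcal{A}_s \cap B$, and $\mathcal{A}_s \cap C$ are combinatorially constant across $s \in J$, so that the tuple calculation above is legitimate. In other words, one must verify that no vertex of any maximal domain punctures the interior of $\mathcal{A}_J$, $\mathcal{B}_J$, or $\mathcal{C}_J$. The specific choice of endpoints $8/13$ and $13/21$ is the reason this rigidity holds; the check amounts to comparing a finite list of vertex coordinates against the fibered trapezoid walls and is routine for the program already used to prove Theorem \ref{main}. Once rigidity is established, the two symmetry identities reduce to a finite set of equalities of integer 4-tuples.
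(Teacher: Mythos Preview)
Your overall architecture---assembling $\mu_s$, $\nu_s$ into fiberwise affine maps over $J=[8/13,13/21]$ and reducing to a finite computer check---is exactly the paper's strategy. The gap is in what you propose to check. In the notation of this paper, $f_s|_A$ denotes the \emph{first return map} to $A$ (Definition in \S1), not the single application of $f_s$ restricted to $A$. The trapezoid $A$ is not $f_s$-invariant: a generic point of $A$ leaves $A$ and wanders through $X_s$ for several iterates before returning. Consequently the identity $\mu_s\circ f_s|_A\circ\mu_s^{-1}=f_s^{-1}|_B$ is a statement about composite translations, not about a single step.

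Your verification, by contrast, intersects the twelve one-step maximal domains $D_i$ with $\mathcal A_J$, reads off their one-step coefficient tuples, and compares the $\mu_s$-transform against the one-step tuples of $F^{-1}$. That computation cannot succeed: since $f_s(A)\not\subset A$, there is no reason a single application of $f_s$ on $A$ should conjugate to a single application of $f_s^{-1}$ on $B$. What the paper does instead is compute the first return map $F|_{\mathbf A}$ on the fibered trapezoid $\mathbf A$ and partition $\mathbf A$ into twelve \emph{maximal return domains} $\alpha_0,\dots,\alpha_{11}$ (listed in \S\ref{data_sym1}); these are not the $D_i\cap\mathcal A_J$, and their coefficient tuples (e.g.\ $[8,-4,0,0]$, $[-12,8,0,0]$, $[20,-12,0,0]$) record multi-step cumulative translations. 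Only then does one conjugate by the fibered rotation $\Upsilon$ and compare against $F^{-1}|_{\mathbf B}$, following the four checks of \S\ref{base}. Your rigidity concern is likewise misplaced at the one-step level: what must be combinatorially constant over $J$ is the \emph{return-time} partition of $A$, a finer and more delicate object than the $D_i$-partition. The heuristic about $\mu_s,\nu_s$ reversing the cyclic order of $L_0,L_1,L_2$ is good intuition for why the inverse appears, but it does not bypass the need to compute the actual return map.
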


The proof follows the similar scheme as the one for the base case of the Renormalization theorem (Section \ref{base}). Here we explain the differences. 

Recall that $\mathcal X$ is the fiber bundle over $[1/2,1]$. We define $\mathbf A \subset \mathcal X$ to be the polyhedron over the interval $[8/13, 13/21]$ such that  the fiber above $s$  is the isosceles trapezoid $A_s$, i.e.
\[
\mathbf A = \{(x,y,s)| (x,y) \in A_s  \mbox{ and }  s \in [8/13, 13/21]\}.
\]
Let $F: \mathcal X \to \mathcal X$ be the fiber bundle map on $\mathcal X$. Let $\mathcal S$ be a subset of $\mathcal X$. A \textit{maximal return domain} in $\mathcal S$ under the map $F$ is a maximal subset of the return map $F|_{\mathcal S}$ where $F|_{\mathcal S}$ is entirely defined and continuous. 

With computer assistance, we find that there are 12 maximal return domains $\alpha_i$ in $\mathbf A$, each of which is a convex polytope, i.e.
\[
\mathbf A = \alpha_0 \cup \alpha_1 \cdots \cup \alpha_{10}. 
\]
Moreover, the vertices of each maximal domain $\alpha_i$ are of the format 
\[
\begin{bmatrix}
(m_0s_0+n_0)/2\\
(p_0s_0+ q_0) \sqrt 3/2\\
s_0
\end{bmatrix}
\begin{bmatrix}
(m_0s_1+n_0)/2\\
(p_0s_1+ q_0) \sqrt 3/2\\
s_1
\end{bmatrix}
\cdots
\begin{bmatrix}
(m_{k-1}s_0+n_{k-1})/2\\
(p_{k-1}s_0+ q_{k-1}) \sqrt 3/2\\
s_0
\end{bmatrix}
\begin{bmatrix}
(m_{k-1}s_1+n_{k-1})/2\\
(p_{k-1}s_1+ q_{k-1}) \sqrt 3/2\\
s_1
\end{bmatrix}.
\]
where $m_j,n_j, p_j,q_j \in \Z$ for $0<j <k$, $s_0=8/13$ and $s_1=13/21$. Same as the fiber bundle map $F: \mathcal X \to \mathcal X$, the first return map $F|_{\mathbf A}$ is an piecewise affine map on each maximal return domain $\alpha_i$ partitioning $\mathbf A$. Let  $\alpha$ be a maximal return domain in $\mathbf A$, for $(x,y,z)\in \alpha$, the map $F|_{\mathbf A}$ has action
\[
(x,y,z)\mapsto (x,y,z)+(\frac{az+b}{2}, \frac{cz+d}{2}\sqrt 3,0), \quad a,b,c,d\in \Z.
\]
We call $(a,b,c,d)$ the \textit{coefficient tuple} of the maximal return domain. For convenience, we write maximal return domains of the map $F|_{\mathbf A}$ along with their coefficient tuples in matrix format: 
\[
\begin{bmatrix}
m_0 & n_0 & p_0 & q_0\\
m_1 & n_1 & p_1 & q_1\\
\vdots & \vdots & \vdots & \vdots\\
m_{k-1} & n_{k-1} & p_{k-1} & q_{k-1}\\
a & b & c& d
\end{bmatrix}.
\]
The list of all maximal return domains partitioning polytope $\mathbf A$ is provided in Section \ref{data_sym1}. 

Define the affine map $\Upsilon: \mathcal X \to \mathcal X$ by piecing together all the rotations $\mu_s$ for $s \in [8/13, 13/21]$, i.e.
\[
\Upsilon(x,y,s) = (\mu_s(x,y), s)
\]
The rest of the calculation is the same as the one in Section \ref{base}.

\subsection{Symmetry 2}
\begin{figure}[h]
\centering
\includegraphics[scale=0.5]{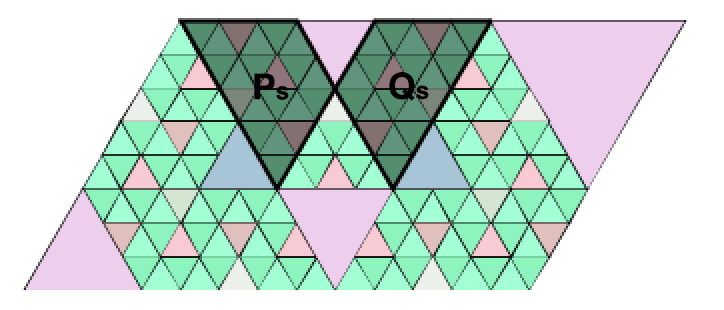}
\caption{An illustration of reflectional symmetry of periodic patterns}
\end{figure}

Let $P_s$ be a isosceles trapezoid of the symmetric piece $M_s$ with base angle $\pi/3$ such that $P_s$ has vertices
\[
\begin{bmatrix}
\frac{1}{2}s-1\\
\frac{\sqrt 3}{2}s
\end{bmatrix}
\begin{bmatrix}
\frac{9}{2}s-3 \\
\frac{\sqrt 3}{2}s
\end{bmatrix}
\begin{bmatrix}
\frac{3}{2}s-1\\
(\frac{7}{2}s-2)\sqrt 3
\end{bmatrix}
\begin{bmatrix}
-\frac{s}{2}\\
(\frac{3}{2}s-1)\sqrt 3
\end{bmatrix}.
\]
Let $\iota_s: X_s \to X_s$ be the reflection about the vertical line $x=\frac{3}{2}s-1$. Define the set $Q_s$ as
\[
Q_s = \iota(P_s).
\]
The following lemma states that the periodic tiles in $P_s$ and the ones in $Q_s$ are same up to reflection $\iota_s$. 
\begin{lemma} 
Suppose $s \in [8/13, 13/21]$, then
\[
\iota_s \circ f_s^{-1}|_{P_s} \circ \iota_s = f_s|_{Q_s}.
\]
\end{lemma}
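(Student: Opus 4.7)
The plan is to mimic the fiber-bundle argument used for Symmetry 1 in Section \ref{sym1}, modifying it only to account for the fact that $\iota_s$ is orientation-reversing and that the conjugation must intertwine $f_s^{-1}$ on $P_s$ with $f_s$ on $Q_s$. First I would form the polytope
\[
\mathbf P=\{(x,y,s)\mid (x,y)\in P_s,\ s\in[8/13,13/21]\}\subset\mathcal X,
\]
which is a fiber bundle over $[8/13,13/21]$ whose fiber above $s$ is $P_s$. The vertex data of $\mathbf P$ are explicit half-integer affine functions of $s$, so $\mathbf P$ is a convex polytope with rational vertex coordinates at the two endpoints, and all subsequent computations can be carried out in exact arithmetic with no round-off error. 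I would set up $\mathbf Q\subset \mathcal X$ similarly as the fiber bundle over the same interval with fibers $Q_s$.

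Next I would piece together the reflections $\iota_s$ into a single affine fiber bundle map $\Xi:\mathcal X\to\mathcal X$ defined by $\Xi(x,y,s)=(\iota_s(x,y),s)$; since $\iota_s$ is the reflection about $x=\tfrac{3}{2}s-1$, $\Xi$ is piecewise affine in $(x,y,s)$ and carries $\mathbf P$ onto $\mathbf Q$. Using the computer-aided machinery already described, I would decompose $\mathbf P$ into the finitely many maximal return domains $\beta_0,\dots,\beta_{k-1}$ of the first return map $F|_{\mathbf P}$ and, likewise, decompose $\mathbf Q$ into maximal return domains $\gamma_0,\dots,\gamma_{k-1}$ of $F|_{\mathbf Q}$. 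Each $\beta_i$ and $\gamma_j$ is a convex polytope whose vertices are of the same half-integer affine form in $s$ as in Section \ref{sym1}, and each carries a coefficient tuple encoding its translation vector.

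The heart of the proof is the following verification, to be done by the program on each maximal return domain $\beta_i$:
\begin{enumerate}
\item there exists $n_i\in\Z_+$ such that $F^{n_i}|_{\beta_i}$ is the first return map to $\mathbf P$;
\item the image $\Xi(\beta_i)$ is contained in some $\gamma_{j(i)}$, with matching return time $n_i$, and the identity
\[
\Xi\circ F^{n_i}=F^{-n_i}\circ \Xi
\]
holds on $\beta_i$, which on coefficient tuples amounts to verifying that the translation vector on $\beta_i$ and the negative of the translation vector on $\gamma_{j(i)}$ are exchanged by the linear part of $\iota_s$;
\item pairwise disjointness of the images $\Xi(\beta_i)$ and the filling identity $\sum_i\operatorname{Vol}(\Xi(\beta_i))=\operatorname{Vol}(\mathbf Q)$.
\end{enumerate}
Together these imply that $\Xi$ conjugates $F|_{\mathbf P}$ to $F|_{\mathbf Q}^{-1}$, which fiberwise gives exactly $\iota_s\circ f_s^{-1}|_{P_s}\circ\iota_s=f_s|_{Q_s}$.

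The main obstacle, as in Symmetry 1, is not conceptual but combinatorial: one must enumerate the maximal return domains partitioning $\mathbf P$, confirm that only finitely many occur (so that the symmetry does not break as $s$ varies continuously over $[8/13,13/21]$), and match each of them with its reflected counterpart in $\mathbf Q$. The sign flip induced by the orientation-reversing $\iota_s$ requires a small book-keeping check: if $(a,b,c,d)$ is the coefficient tuple of $\beta_i$ encoding the translation $\bigl(\tfrac{as+b}{2},\tfrac{cs+d}{2}\sqrt 3\bigr)$, then the expected tuple for the corresponding $\gamma_{j(i)}$ is $(-a,2(3s-2)-b,c,d)$ up to the reflection axis offset, and verifying this on the explicit list of tuples is the one nontrivial piece of arithmetic. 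Everything else is a direct replay of the base-case argument of Section \ref{base}.
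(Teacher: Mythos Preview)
Your proposal is correct and follows essentially the same approach as the paper: the paper simply says to apply the fiber-bundle method of Symmetry~1, defines the polyhedron $\mathcal Q=\{(x,y,s)\mid (x,y)\in Q_s,\ s\in[8/13,13/21]\}$, and supplies the ten maximal return domains of $F|_{\mathcal Q}$ with their coefficient tuples in Section~\ref{data_sym2}. The only cosmetic difference is that the paper works on the $\mathcal Q$ side alone rather than on both $\mathbf P$ and $\mathbf Q$; your coefficient-tuple book-keeping remark has a stray $s$ in it (the reflected tuple should be $(-a,-b,c,d)$, since $\iota_s$ negates the $x$-component of any translation vector), but this is a minor slip in an otherwise faithful replay of the base-case computation.
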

We apply the same method as the proof of Lemma 4.1. Define $\mathcal Q$ be the fiber bundle over $[8/13, 13/21]$
\[ 
\mathcal Q = \{(x,y,s)| (x,y) \in Q_s \mbox{ and } s \in [8/13, 13/21]\}.
\]
The list of maximal return domains in the polyhedra $\mathcal Q$ along with their coefficient tuples are provided in Section 7. 
 
\section{Limit Set $\Lambda_\phi$ for $\phi=\frac{\sqrt 5-1}{2}$}
In this section, we describe one application of our main theorem. We show that $\Delta_\phi$ is dense. We explore the properties of the limit set $\Lambda_\phi$ where $\phi$ is the only fixed point under the renormalization map $R$. Denote symmetric trapezoids defined in Section \ref{sym1} as $A, B$ and $C$ for $s=\phi$. We call the trapezoids $A, B$ and $C$ the \textit{fundamental trapezoids}. 

\subsection{Shield Lemma}
 Let $l_0$ be the line such that the intersection $A\cap B$ belongs to $l_0$. We say that a polygon $P$ \textit{abuts} on $l_0$ if $P$ has an edge contained in $l_0$. We call such segment a \textit{contact}. 
\begin{lemma}[Shield]
Let $V$ be the top left vertex of $X_s$. There is a sequence of periodic tiles $\{P_n\}_{n=0}^\infty$ satisfying the following properties:
\begin{enumerate}
\item  Each element $P_n$ is an equilateral triangle abutting the line segement $l_0$.
\item The sequence occur in a monotone decreasing size. The periodic tile $P_{n+1}$ is similar to $P_n$ of scaling factor $\phi$ for all $n \geq 0$.
\item The periodic tiles in the sequence, from largest to smallest, move towards the point $V$. 
\item For any point $p \in l_0 \setminus V$, there must be a periodic tile $P_n$ in the sequence whose contact of positive length contains $p$.
\end{enumerate}
\end{lemma}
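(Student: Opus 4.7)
The plan is to exploit the self-conjugacy at the fixed point $\phi=R(\phi)$ to produce $\{P_n\}$ as iterated images of a single initial tile under an explicit contracting similarity. Since $\phi\in[1/2,2/3)$, Theorem~\ref{main} and the formula from Section~\ref{base} give
\[
\psi_\phi(x,y)=\frac{1}{\phi}\begin{pmatrix}-1/2 & -\sqrt{3}/2 \\ -\sqrt{3}/2 & 1/2\end{pmatrix}\begin{pmatrix}x+\phi\\ y\end{pmatrix}.
\]
The $2\times 2$ matrix is an orientation-reversing involution whose $+1$-eigenspace is spanned by $(1,-\sqrt{3})$, and the golden-ratio identity $\phi^2=1-\phi$ yields $\psi_\phi(V)=V$ by direct substitution. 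Hence $\psi_\phi$ is the similarity centered at $V$ with ratio $1/\phi$ whose reflection axis is precisely $l_0$, and $\psi_\phi^{-1}$ is the contracting similarity fixing $V$ with ratio $\phi$ that reflects across $l_0$.

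Next I would identify an initial equilateral-triangle periodic tile $P_0\in\Delta_\phi$ abutting $l_0$, whose contact is exactly the segment from $V+\phi^2(1,-\sqrt{3})$ to $V+\phi(1,-\sqrt{3})$. Geometrically, the outer endpoint is where $l_0$ exits $X_\phi$ through the bottom side, and the inner endpoint is where $l_0$ exits the sub-parallelogram $Y_\phi$ through its right side; both identifications use $\phi^2=1-\phi$. Existence of such a $P_0$ with exactly these endpoints is a finite check on the partition defining $f_\phi$ near the far end of the chord $l_0\cap X_\phi$, aided by the $\pi/3$-rotational symmetry $\mu_\phi$ of Section~\ref{sym1}.

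Setting $P_{n+1}=\psi_\phi^{-1}(P_n)$ for $n\ge 0$, properties (1) and (2) are immediate: since $l_0$ is the reflection axis of $\psi_\phi^{-1}$, equilateral triangles abutting $l_0$ are sent to equilateral triangles abutting $l_0$, and the ratio between consecutive tiles is $\phi$ by construction. For (4), $\psi_\phi^{-1}$ acts on $l_0$ as the scalar $\phi$ about $V$, so the contact of $P_n$ is the segment $[V+\phi^{n+2}(1,-\sqrt{3}),\,V+\phi^{n+1}(1,-\sqrt{3})]$; the telescoping union $\bigcup_{n\ge 0}[\phi^{n+2},\phi^{n+1}]=(0,\phi]$ exhausts the chord $l_0\cap X_\phi$ minus $V$.

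The main technical obstacle is property (3). By Theorem~\ref{main}, $\psi_\phi^{-1}$ sends $f_\phi$-periodic tiles in $X_\phi$ to periodic tiles of the first-return map $f_\phi|_{Y_\phi}$ inside $Y_\phi$, and each first-return tile is contained in an $f_\phi$-periodic tile by a standard coding argument. To conclude that $P_n$ itself is an $f_\phi$-periodic tile (not a proper subset), one must verify that the $f_\phi$-tile containing $P_n$ does not cross $\partial Y_\phi$. This reduces, via convexity of periodic tiles and the explicit description of $\partial Y_\phi$, to checking that the portion of $\partial Y_\phi$ adjacent to $P_n$ is itself a union of edges of periodic tiles. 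Property (4) is also sensitive to the exact endpoints of $P_0$'s contact; any deviation would produce gaps or overlaps, so verifying those precise endpoints is part of the same finite computation used to exhibit $P_0$.
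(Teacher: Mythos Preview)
Your approach coincides with the paper's: set $\zeta=\psi_\phi^{-1}$, define $P_n=\zeta^n(P_0)$ for the same fixed equilateral triangle $P_0$ (your contact endpoints $V+\phi(1,-\sqrt 3)$ and $V+\phi^2(1,-\sqrt 3)$ match the paper's $P_0$ exactly, via $2\phi-1=\phi^3$), and use that $\zeta$ is a $\phi$-contraction fixing $V$ with reflection axis $l_0$, so the contacts telescope over $l_0\setminus\{V\}$. One small correction: the obstacle you discuss under ``property (3)'' is really the prerequisite that each $P_n$ be a genuine $f_\phi$-periodic tile rather than a proper first-return sub-tile---a point the paper simply asserts without addressing your valid concern---whereas property (3) as stated (monotone convergence toward $V$) is immediate once $V$ is identified as the fixed point of the contraction.
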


\begin{proof}
Let $P_0$ be a fixed periodic tile of $X_\phi$ under the map $f_s$ (Figure \ref{fig:shield}). The tile $P_0$ is an equilateral triangle of side length $4s-2$ with a horizontal top side. The bottom vertex has coordinate $(3s/2-1, -\sqrt 3/2)$. The triangle $P_0$ abuts the line $l_0$. Recall the map of similarity $\psi_s$ in Theorem \ref{main} taking the subset $Y_s$ to $X_{R(s)}$. For convenience, we write $\zeta$ as the inverse $\psi^{-1}_\phi$. The scaling factor of the similarity $\zeta$ is $\phi=\frac{-1+\sqrt 5}{2}$. 

Consider a sequence $\{P_n \}_{n=0}^\infty$ such that 
\[
P_n = \zeta^n(P_0).
\]
Since $\phi$ is fixed by the renormalization map $R$, each $P_n$ is a periodic equilateral triangle of $X_\phi$. Note that the line $l_0$ is invariant under the map $\zeta$ so that each periodic $P_n$ abuts to $l_0$.  Moreover, the triangles $P_n$ in the sequence point towards different side of $l_0$ alternatively. Therefore, we have shown statement 1, and statement 2 and 3 follow directly. 

According to the renormalization and properties of the map $\xi$
\[
P_i \cap P_j = \left\{
\begin{array}{l l}
\emptyset \quad & \mbox{if $j\neq i\pm 1$}\\
v \quad & \mbox{otherwise}
\end{array}
\right.
\]
where $v$ is vertex for both $P_i$ and $P_j$. Therefore, for any point $p \in l_0 \setminus V$, there must exists an element $P_n=\zeta^n(P_0)$ for some $n \geq 0$ such that $P_n$ has a side of positive length on $l_0$ containing  $p$. 
\begin{figure}[h]
\begin{center}
\centering
\includegraphics[scale=0.45]{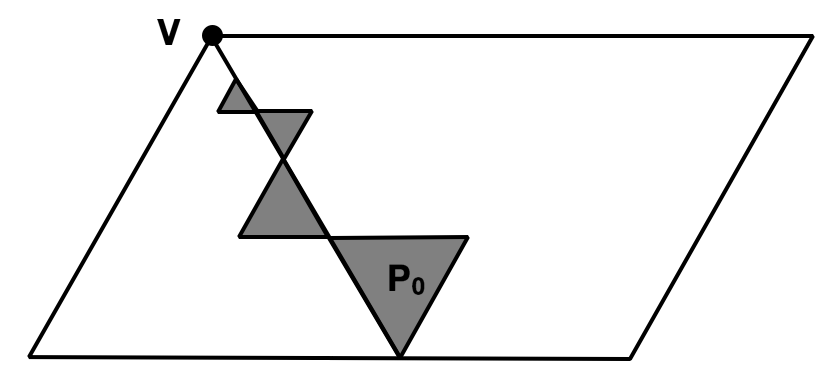}
\caption{The sequence of periodic tiles in Lemma 5.1}
\label{fig:shield}
\end{center}
\end{figure}
\end{proof} 

\begin{corollary}\label{contact}
Every point $p \in l_0\setminus V$ is contained in the edge of a periodic triangle given by the map $f_\phi$. 
\end{corollary}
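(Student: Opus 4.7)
The plan is to derive this corollary as an immediate consequence of property~(4) of the Shield Lemma. Given any $p \in l_0 \setminus V$, the lemma supplies a periodic tile $P_n$ from the shield sequence $\{P_n\}_{n \ge 0}$ whose contact of positive length contains $p$. By the definition of \emph{contact} given just before the lemma, this contact is a segment lying in an edge of $P_n$ that is contained in $l_0$, so $p$ automatically lies on an edge of the polygon $P_n$. Property~(1) of the lemma states that each $P_n$ is an equilateral triangle abutting $l_0$, and by construction $P_n = \zeta^n(P_0)$, where $P_0$ is a fixed periodic triangle and $\zeta = \psi_\phi^{-1}$ is the similarity coming from renormalization at the fixed point $\phi$. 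Since $\phi$ is fixed by $R$, the conjugacy from Theorem~\ref{main} shows that $\zeta$ preserves the periodic pattern, so each $P_n$ is itself a periodic tile for $f_\phi$. This is precisely the statement to be proved.

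The only two small points I would verify explicitly are: first, that the contact truly sits on an edge of the triangle (not on some lower-dimensional corner), which follows because property~(4) guarantees positive length and a triangular tile meets $l_0$ in at most one entire edge; and second, that the excluded point $V$ is unavoidable, since the triangles $P_n$ shrink by factor $\phi$ at each step and converge to $V$ without any single $P_n$ actually reaching it, so $V$ is not contained in any contact.

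I do not expect any genuine obstacle here: all the dynamical content was absorbed into the Shield Lemma, and the corollary is essentially a reformulation of its item~(4) in the language of edges of periodic tiles. The only drafting choice is whether to state this as a one-line consequence or to spell out the identification \emph{contact of $P_n$} $=$ \emph{edge of the periodic triangle $P_n$ lying on $l_0$}; I would choose the latter, since the rest of Section~6 will rely on treating the pieces of $l_0$ as being covered by edges of periodic triangles.
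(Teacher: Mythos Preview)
Your proposal is correct and follows the same approach as the paper: the corollary is stated there without proof, as an immediate consequence of item~(4) of the Shield Lemma. Your write-up is in fact more detailed than the paper's, which simply records the corollary and moves on.
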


Note that the union of the fundamental trapezoids $A\cup B\cup C$ contains all periodic tiles $\Delta_p \in \Delta$, except for three fixed periodic triangles, so 
\[
\Lambda_\phi \subset A \cup B \cup C. 
\]
Let $S \subset X_s$ be a subset and $\Lambda_\phi(S)=\Lambda_\phi \cap S$.  The Shield lemma implies that 
\[
\Lambda_\phi(A) \cap \Lambda_\phi(B) = \{\xi^{-n}(V_0)\}_{n=1}^\infty
\]
where $V_0$ is the bottom vertex of $P_0$.  
\subsection{Substitution Rule}
In this section, we provide a precise description of the substitution rule introduced in Section \ref{main}. Let $M$ be an isosceles trapezoid such that the base angles of $M$ are $\pi/3$. The parallel sides of $M$ are of side length $2s$ and $2(1-s)$, and two non-parallel ones are of length $2(2s-1)$. The trapezoid $M$ is substituted by three similar isosceles trapezoids $M_1, M_2$ and $M_3$  of scaling factor $\phi, \phi^2$ and $\phi$ respectively (Figure \ref{fig:sub_rule}). The non-parallel sides  of $M$ become the longest parallel sides of $M_1$ and $M_3$, and the top side of $M$ becomes the longer parallel side of $M_2$. Note that the trapezoids $M_1$ and $M_3$ are same up to reflection about the vertical line passing through the midpoint of the parallel sides of $M$. 
\begin{figure}[h]
\centering
\includegraphics[scale=0.45]{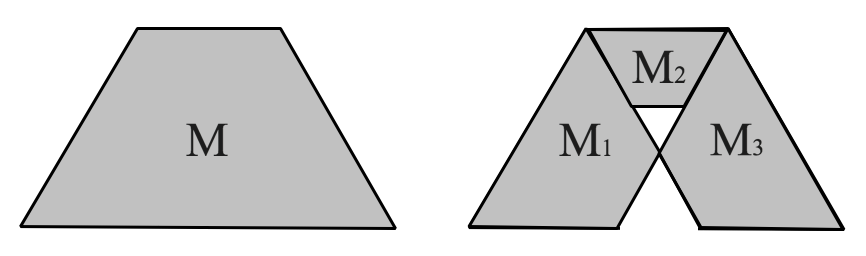}
\caption{Substitution rule}
\label{fig:sub_rule}
\end{figure}

We apply the substitution on each fundamental trapezoid. By symmetries, we can restrict our focus to the substitution of the isosceles trapezoid $A$. Let $A_1$, $A_2$ and $A_3$ be the three substituted trapezoids of $A$ as shown in Figure \ref{fig:first_sub}. We have the following observations:
\begin{figure}[h]
\centering
\includegraphics[scale=0.5]{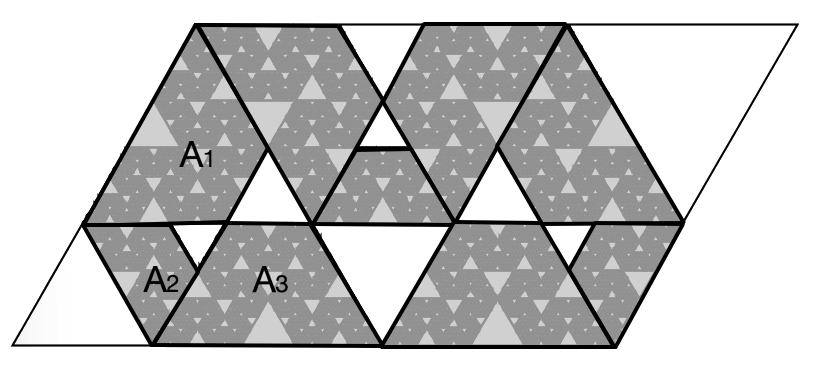}
\caption{The substitution on fundamental trapezoids $A$, $B$ and $C$}
\label{fig:first_sub}
\end{figure}
\begin{enumerate}
\item Denote the equilateral triangle bounded by sides of $A, B$ and $C$ by $P_0$. The triangle $P_0$ is a fixed periodic tile under the map $f_\phi$. Recall the map $\xi=\psi_\phi^{-1}$ where $\psi_\phi$ is the map of similarity for renormalization. Let $\iota_1$ be the reflection about the line $l_1: y=\frac{\sqrt 3}{3}(x+1-s)$ where $l_1=\xi(l)$ and $l: x=\frac{3}{2}s-1$ is the line of reflectional symmetry. The set $A\setminus \bigcup_{i=1}^3 A_i$ is composed of two triangles:
\[
\xi(P_0) \quad \mbox{and} \quad \iota_1 \circ \xi^2(P_0).
\]
Since the triangle $P_0$ is a periodic tile of $f_\phi$, according to renormalization and symmetry of $f_s$, both triangles $\xi(P_0)$ and $\iota_1 \circ \xi^2(P_0)$ are periodic tiles  in containing in the fundamental trapezoid $A$. 
\item Let $S \subset X_s$ and $\Delta(\mathcal S) = \Delta_\phi \cap S$ be the restriction of the periodic pattern $\Delta_\phi$ on $S$. Then $\Delta(A_i)$ is same as $\Delta(A)$ up to a map of similarity for all $i\in \{1,2,3\}$. To see this, recall that $\mu_s$ is the map of rotation about the top left vertex of $X_s$ by $\pi/3$. We have the relations
\[
A_1 = \mu_\phi \circ \xi(A), \quad A_2 = \mu_\phi \circ \xi^2(A) \quad \mbox{and} \quad A_3 =\iota_1(A_1).
\]
\end{enumerate}

\begin{lemma}
The periodic pattern $\Delta_\phi$ is dense.
\end{lemma}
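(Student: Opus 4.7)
The plan is to iterate the substitution rule of Section 6.2 on the fundamental trapezoids $A$, $B$, $C$ and show that the residual set left after $n$ iterations consists of trapezoids whose diameters shrink geometrically to zero, so that every point of $X_\phi$ is arbitrarily close to a periodic tile.

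First I would note that $X_\phi = (A\cup B\cup C)\cup T$, where $T$ is the union of the three fixed equilateral triangles remarked on just before Corollary \ref{contact}; each component of $T$ is itself a periodic tile of $f_\phi$, so it suffices to prove density in each fundamental trapezoid. By the rotational symmetry of Lemma 5.1 and the reflectional symmetry of Lemma 5.2, the restricted patterns $\Delta_\phi(A)$, $\Delta_\phi(B)$, $\Delta_\phi(C)$ are isometric up to piecewise similarity, so it is enough to treat $A$.

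By the substitution discussion preceding the lemma,
\[
A = A_1\cup A_2\cup A_3\cup \xi(P_0)\cup \iota_1\bigl(\xi^2(P_0)\bigr),
\]
where $\xi(P_0)$ and $\iota_1(\xi^2(P_0))$ are periodic tiles of $f_\phi$, each $A_i$ is similar to $A$ with ratio $\phi$ (for $i=1,3$) or $\phi^2$ (for $i=2$), and by observation (2) of Section 6.2 the restricted pattern $\Delta_\phi(A_i)$ is a similar copy of $\Delta_\phi(A)$. Applying the substitution inductively inside each $A_i$, after $n$ steps $A$ is decomposed as a growing finite union of periodic tiles extracted at stages $\le n$, together with at most $3^n$ residual trapezoids similar to $A$, each of diameter at most $\phi^n\cdot\mathrm{diam}(A)$. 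Since $\phi<1$, these residual diameters tend to zero, so every point of $A$ lies within distance $\phi^n\cdot\mathrm{diam}(A)$ of a periodic tile for every $n$; density in $A$ follows, and then in $X_\phi$ by the reduction above.

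The main obstacle is the recursive step: one must know that the tiles produced at every stage are genuinely periodic tiles of $f_\phi$ itself, not just of some conjugated or renormalized map. This is precisely where the choice $\phi$ as the unique fixed point of $R$ is used. The similarity $\xi=\psi_\phi^{-1}$ from the Renormalization Theorem satisfies $f_\phi|_{Y_\phi} = \xi\circ f_\phi\circ\xi^{-1}$, so $\xi$ carries periodic tiles of $f_\phi$ to periodic tiles of $f_\phi$; combined with the symmetry maps $\mu_\phi$ and $\nu_\phi$ of Section 5 (which likewise preserve the periodic pattern), this shows that all the extracted triangles $\xi^k(P_0)$, $\iota_1(\xi^k(P_0))$, and their rotated/reflected counterparts inside $A_i,B_i,C_i$ at every stage are actual periodic tiles of $f_\phi$, completing the argument.
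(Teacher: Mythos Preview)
Your proposal is correct and follows essentially the same approach as the paper: reduce to the fundamental trapezoid $A$ by symmetry, apply the substitution rule iteratively so that $A$ decomposes into periodic tiles together with residual trapezoids of geometrically shrinking diameter, and conclude density. Your version is in fact slightly more careful than the paper's in two respects---you explicitly account for the complement $T$ of $A\cup B\cup C$ in $X_\phi$, and you spell out why the extracted triangles at every stage are genuine periodic tiles of $f_\phi$ (via $R(\phi)=\phi$ and the symmetry lemmas)---but the argument is the same.
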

\begin{proof}
By symmetries, we restrict our focus to the fundamental trapezoid $A$. We show that any point $p \in A$ is within $\epsilon$ of some periodic tiles. According to previous discussion, the fundamental trapezoid $A$ is partitioned into a finite union of periodic tiles and a finite union of trapezoids which are same as $A$ up to similarities. The similarities are contractions by the factor $\phi^n$ for $n \geq 0$.  We call a trapezoid which is similar to $A$ and obtained by substitutions an $\epsilon$-\textit{patch} if its longest side has length $l_e \leq \epsilon$.  

By iterating the substitution, for any $p \in A$, we have $p \in \Delta_p$ for some periodic tile $\Delta_p$ or $p$ must lie in some $\epsilon$-patch $K$. For the first case, we are done. If $p \in K$ for some $\epsilon$-patch $K$, then $K$ must contain a periodic triangle by previous discussion. Therefore, the point $p$ lie within $\epsilon$ of the periodic tile. 
\end{proof}

\subsection{Proof of Theorem 1.7}
In this section, we show that the limit set $\Lambda_\phi$ can be obtained by a sequence of substitutions on the fundamental trapezoids. 

Let $\mathcal C_0=A\cup B\cup C$. The set $\mathcal C_{n+1}$ is defined inductively as the union of all trapezoids given by applying substitution rule on each trapezoid in $\mathcal C_{n}$ for $n \geq 0$. We call the trapezoids in $\mathcal C_n$ marked trapezoids. Define
\[
L\Lambda = \lim_{n\to \infty} \mathcal C_n.
\] 
Here we show that 
\[
L\Lambda = \Lambda_\phi.
\]
``$\subseteq$'' Take an arbitrary point $p \in L\Lambda$. Since the sequence of chains $\{\mathcal C_n\}_{0}^\infty$ is a nested family of unions of marked trapezoids, any neighborhood of $p$ intersects an infinite sequence of marked trapezoids. According to previous section, each marked trapezoid contains infinitely many periodic tiles. Therefore, we have $p \in \Lambda_\phi$.\\
``$\supseteq$'' Take a point $p \in \Lambda_\phi$, then every neighborhoods $U_p$ of $p$ intersects infinitely many periodic tiles. There must exists a trapezoids $M \in \mathcal C_{n_0}$ so that $p \in M$ and $M \cap U_p \neq \emptyset$. Otherwise, $p$ must be in some periodic tile, which leads to a contradiction. We want to show that $p$ belongs to the chain $\mathcal C_n$ for every $n \geq n_0$. Suppose there exists a marked trapezoid $M_n$ such that $p \in M_n \setminus M'$ for any marked trapezoid $M' \subset M$ and $M' \in \mathcal C_{n+1}$, then there is a neighborhood of $p$ intersecting only finitely many periodic tiles in $\Delta_\phi$. Contradiction. \qed

\subsection{Proof of Theorem 1.9}
Because of the reflection and rotational symmetries of the fundamental trapezoids $A$, $B$ and $C$, we restrict our focus on the substitutions in trapezoid $A$ in the chain $\mathcal C_0$ first. Let $\sigma_i$ be the similarity on the trapezoid $A$ to smaller trapezoids $A_i$ by substitution for $i=1,2$ and $3$. Let $\Lambda_\phi|_A$ be the limit set $\Lambda_\phi$ restricting on $A$. By construction, the limit set $\Lambda_\phi|_A$ is the attractor of the iterated function system given by the similarites $\{\sigma_i\}_{i=1}^3$, i.e. 
\[
\Lambda_\phi|_A = \bigcup_{i=1}^3 \sigma_i(\Lambda_\phi|_A).
\]
To compute the Hausdorff dimension of $\Lambda_\phi$, we check the open set condition (Section 2.5). The condition holds simply by taking the open set $V$ as the interior of $A$. Thus, according to Theorem 9.3 in \cite{FA}, we have the equation:
\[
2\phi^s+\phi^{2s} = 1.
\]
The Hausdorff dimension for $\Lambda_\phi$ in the polygon $A$ is 
\[
s=\frac{\log(-1+\sqrt 2)}{\log \phi}=1.83147\cdots.
\]
Same computation on the limit set $\Lambda_\phi$ restricting in the trapezoids $B$ and $C$. By elementary properties of Hausdorff dimension, we have
\[
1<dim_H(\Lambda_\phi)=\frac{\log(-1+\sqrt 2)}{\log \phi}=1.83147\cdots<2. \qed
\]

\section{The Computational Data}\label{data}
\subsection{Data of Maximal Domains}\label{max}
Here we list all the maximal domains $D_i$ of the fiber bundle $\mathcal X$ for the map $F$ and their coefficient tuples $\omega_i$ for $i=0,\cdots, 11$. To have a nicer form, we scale the $y$-coordinates by $1/\sqrt 3$.

{\tiny
\[
D_0:  \begin{bmatrix}
2/3\\
1/2 \\ 
1
\end{bmatrix}
\begin{bmatrix}
5/4 \\
1/4 \\
1/2
\end{bmatrix}
\begin{bmatrix}
1/4 \\ 
1/4 \\
1/2
\end{bmatrix}
\begin{bmatrix}
3/4 \\ 
-1/4 \\
1/2
\end{bmatrix}, \quad 
\omega_0 = [0,0,0,0];
\qquad 
D_1: \begin{bmatrix}
1/2 \\
-1/2 \\
1
\end{bmatrix}
\begin{bmatrix}
1/4 \\
1/4 \\
1/2
\end{bmatrix}
\begin{bmatrix}
3/4 \\
-1/4 \\
1/2
\end{bmatrix}
\begin{bmatrix}
-1/4 \\
-1/4 \\
1/2
\end{bmatrix}
\begin{bmatrix}
1/4 \\
-1/4 \\
1/2
\end{bmatrix}, \quad
\omega_1 = [2,-2, 0,0];
\]
\[
D_2: \begin{bmatrix}
3/4 \\ 
1/4 \\
1
\end{bmatrix}
\begin{bmatrix}
0 \\
1/6 \\
2/3
\end{bmatrix}
\begin{bmatrix}
1/8 \\
1/8 \\
1/2 
\end{bmatrix}
\begin{bmatrix}
-1/8 \\
-1/8\\
1/2
\end{bmatrix}, \quad \omega_2 = [0,-1,-2,1];
\qquad 
D_3: \begin{bmatrix}
-1/2 \\
1/2 \\
1
\end{bmatrix}
\begin{bmatrix}
1/2 \\
-1/2 \\
1
\end{bmatrix}
\begin{bmatrix}
-2/3 \\
-1/3 \\
2/3 
\end{bmatrix}
\begin{bmatrix}
-1/4 \\
-1/4 \\
1/2
\end{bmatrix}, \quad \omega_3 = [1,-1,-1,1];
\]
\[
D_4: \begin{bmatrix}
3/2 \\ 
1/2 \\
1
\end{bmatrix}
\begin{bmatrix}
-3/4 \\
1/4 \\
1/2 
\end{bmatrix}
\begin{bmatrix}
-1/4 \\
-1/4 \\
1/2
\end{bmatrix}
\begin{bmatrix}
-5/4 \\
-1/4 \\
1/2
\end{bmatrix}, \quad \omega_4 = [-2,2,0,0];
\qquad 
D_5: \begin{bmatrix}
3/2 \\
1/2 \\
1
\end{bmatrix}
\begin{bmatrix}
-1/2 \\
1/2 \\
1
\end{bmatrix}
\begin{bmatrix}
-3/4 \\ 1/4 \\ 1/2 
\end{bmatrix}
\begin{bmatrix}
-5/4 \\ -1/4 \\ 1/2
\end{bmatrix}, \quad \omega_5 = [-2,2,0,0];
\]
\[
D_6: \begin{bmatrix}
-1/2 \\ 1/2 \\ 1
\end{bmatrix}
\begin{bmatrix}
1/2 \\ -1/2 \\ 1
\end{bmatrix}
\begin{bmatrix}
-3/2 \\ -1/2 \\ 1
\end{bmatrix}
\begin{bmatrix}
-5/4 \\ -1/4 \\ 1/2
\end{bmatrix}, \quad \omega_6 = [0,0,0,0];
\qquad 
D_7: \begin{bmatrix}
0 \\ 1/3 \\ 2/3 
\end{bmatrix}
\begin{bmatrix}
1/4 \\ 1/4 \\ 1/2
\end{bmatrix}
\begin{bmatrix}
-3/4 \\ 1/4 \\ 1/2
\end{bmatrix}
\begin{bmatrix}
-1/4 \\ -1/4 \\ 1/2
\end{bmatrix}, \quad \omega_7= [0,0,0,0];
\]
\[
D_8: \begin{bmatrix}
-1/2 \\ 1/2 \\ 1
\end{bmatrix}
\begin{bmatrix}
-2/3 \\ -1/3 \\ 2/3 
\end{bmatrix}
\begin{bmatrix}
-1/4 \\ -1/4 \\ 1/2
\end{bmatrix}
\begin{bmatrix}
-5/4 \\ -1/4 \\ 1/2
\end{bmatrix}, \quad \omega_8 = [2,0,0,0];
\qquad
D_9: \begin{bmatrix}
2/3 \\ 1/2 \\ 1
\end{bmatrix}
\begin{bmatrix}
1/2 \\ -1/2 \\ 1
\end{bmatrix}
\begin{bmatrix}
1 \\ 0 \\ 2/3
\end{bmatrix}
\begin{bmatrix}
1/4 \\ 1/4 \\ 1/2
\end{bmatrix}, \quad \omega_9 = [1,-1,1,-1];
\]
\[
D_{10}: \begin{bmatrix}
1/2 \\ -1/2 \\ 1
\end{bmatrix}
\begin{bmatrix}
1 \\ 0 \\ 2/3
\end{bmatrix}
\begin{bmatrix}
1/4 \\ 1/4 \\ 1/2
\end{bmatrix}
\begin{bmatrix}
3/4 \\ -1/4 \\ 1/2
\end{bmatrix}, \quad \omega_{10} = [0,-1,2,-1];
\qquad
D_{11}:  \begin{bmatrix}
3/2 \\ 1/2 \\ 1
\end{bmatrix}
\begin{bmatrix}
-1/2 \\ 1/2 \\ 1
\end{bmatrix}
\begin{bmatrix}
1/2 \\ -1/2 \\ 1
\end{bmatrix}
\begin{bmatrix}
-1/4 \\ -1/4 \\ 1/2
\end{bmatrix}, \quad \omega_{11} =[0,0,0,0].
\]
}

\subsection{Maximal Return Domains for Symmetry 1}\label{data_sym1}

Here is the list of all the maximal return domains in $\mathbf A$ along with their coefficient tuple of matrix format:

{\tiny
\[
\alpha_0: \begin{bmatrix}
-5 & 1 & -1 & 0 \\
3 & -2 & -1 & 0 \\
-1 & 0 & 3 & -2 \\
2 & -2 & -2 & 2 
\end{bmatrix}, \quad 
\alpha_1: \begin{bmatrix}
7 & -6 & -1 & 0 \\
-5 & 2 & -1 & 0 \\
-3 & 0 & 9 & -6 \\
-9 & 4 & 3 & -2 \\
8 & -4 & 0 & 0 
\end{bmatrix}, \quad
\alpha_2: \begin{bmatrix}
-5 & 2 & -1 & 0 \\
1 & -2 & -7 & 4 \\
-11 & 6 & -7 & 4 \\
-4 & 2 & 4 & -2 
\end{bmatrix}
\]
\[
\alpha_3: \begin{bmatrix}
-3 & 0 & 9 & -6 \\
3 & -4 & 3 & -2 \\
-9 & 4 & 3 & -2 \\
8 & -4 & 0 & 0
\end{bmatrix}, \quad
\alpha_4: \begin{bmatrix}
1 & -2 & -7 & 4\\
-11 & 6 & -7 & 4 \\
11 & -4 & 3 & -2 \\
-1 & 0 & 3 & -2 \\
6 & -4 & 6 & -4
\end{bmatrix}, \quad
\alpha_5: \begin{bmatrix}
1 & -2 & -7 & 4\\
-9 & 4 & 3 & -2 \\
11 & -4 & 3 & -2\\
0 & 0 & 0 & 0
\end{bmatrix}
\]
\[
\alpha_6: \begin{bmatrix}
23 & -8 & 3 & -2 \\
11 & -8 & 3 & -2 \\
17 & -12 & -3 & 2\\
-6 & 4 & 6 & -4
\end{bmatrix},\quad
\alpha_7: \begin{bmatrix}
11 & -8 & 3 & -2 \\
-1 & 0 & 3 & -2 \\
5 & -2 & -3 & 2 \\
0 & 0 & 0 & 0
\end{bmatrix},\quad
\alpha_8: \begin{bmatrix}
3 & -4 & 3 & -2 \\
23 & -16 & 3 & -2\\
-3 & 0 & -3 & 2 \\
17 & -12 & -3 & 2\\
-12 & 8 & 0 & 0
\end{bmatrix}
\]
\[
\alpha_{9}: \begin{bmatrix}
11 & -8 & 3 & -2 \\
5 & -2 & -3 & 2 \\
7 & -6 & 7 & -4\\
-5 & 2 & 7 & -4\\
-4 & 2 & -4 & 2 
\end{bmatrix}, \quad
\alpha_{10}: \begin{bmatrix}
-3 & 0 & -3 & 2 \\
17 & -12 & -3 & 2 \\
7 & -6 & 7 & -4 \\
-2 & 2 & 2 & -2 
\end{bmatrix}, \quad
\alpha_{11}: \begin{bmatrix}
7 & -6 & 7 & -4 \\
-5 & 2 & 7 & -4 \\
1 & -2 & 1 & 0\\
-4 & 2 & -4 & 2
\end{bmatrix}
\]
}

\subsection{Maximal Return Domains for Symmetry 2}\label{data_sym2}

Here are 10 maximal return domains $\alpha_0, \cdots, \alpha_9$ in $\mathbf Q$ along with their coefficient tuples of matrix format:

{\tiny
\[
\alpha_0: \begin{bmatrix}
7 & -4 & 3 & -2 \\
-3 & 2 & 13 & -8 \\
2 & 0 & -3 & 2 \\
-9 & 6 & 7 & -4 \\
6 & -4 & -6 & 4
\end{bmatrix},\quad
\alpha_1: \begin{bmatrix}
-3 & 2 & 13 & -8 \\
13 & -8 & -3 & 2 \\
-19 & 12 & -3 & 2 \\
4 & -2 & 4 & -2
\end{bmatrix},\quad
\alpha_2: \begin{bmatrix}
13 & -8 & -3 & 2 \\
-19 & 12 & -3 & 2 \\
3 & -2 & 7 & -4 \\
-29 & 18 & 7 & -4 \\
20 & -12 & 0 & 0 
\end{bmatrix}
\]
\[
\alpha_3: \begin{bmatrix}
2 & 0 & -3 & 2 \\
-9 & 6 & 7 & -4 \\
11 & -6 & 7 & -4 \\
0 & 0 & 0 & 0
\end{bmatrix}, \quad 
\alpha_4: \begin{bmatrix}
-19 & 12 & -3 & 2 \\
-29 & 18 & 7 & -4 \\
-9 & 6 & 7 & -4 \\
10 & -6 & 10 & -6
\end{bmatrix},\quad
\alpha_5: \begin{bmatrix}
23 & -14 & 7 & -4 \\
-9 & 6 & 7 & -4 \\
7 & -4 & -9 & 6\\
0 & 0 & 0 & 0
\end{bmatrix}
\]

\[
\alpha_6: \begin{bmatrix}
-9 & 6 & 7 & -4 \\
11 & -6 & 7 & -4 \\
7 & -4 & -9 & 6 \\
27 & -16 & -9 & 6 \\
 -10 & 6 & 10 & -6
\end{bmatrix}, \quad
\alpha_7: \begin{bmatrix}
3 & -2 & 7 & -4 \\
23 & -14 & 7 & -4 \\
-3 & 2 & 1 & 0\\
17 & -10 & 1 & 0 \\
-12 & 8 & 0 & 0
\end{bmatrix},\quad
\alpha_8: \begin{bmatrix}
7 & -4 & -9 & 6 \\
27 & -16 & -9 & 6 \\
17 & -10 & 1 & 0 \\
6 & -4 & 6 & -4 
\end{bmatrix}
\alpha_{9}: \begin{bmatrix}
11 & -6 & 7 & -4 \\
17 & -10 & 1 & 0 \\
5 & -2 & 1 & 0 \\
-4 & 2 & -4 & 2 
\end{bmatrix}
\]

}

\clearpage{}
\bibliography{triplePets} 
\bibliographystyle{plain}

\end{document}